\newtheorem{theorem}{Theorem}[section]
\newtheorem{lemma}[theorem]{Lemma}
\theoremstyle{definition}
\newtheorem{example}[theorem]{Example}
\newtheorem{remark}[theorem]{Remark}
\renewcommand{\Re}{\mathop{\mathrm{Re}}}
\renewcommand{\Im}{\mathop{\mathrm{Im}}}
\renewcommand{\i}{\mathrm{i}}
\renewcommand{\d}{\mathrm{d}}
\newcommand{\CC}{{\mathbb C}}
\newcommand{\capp}{\mathrm{cap}}
\newcommand{\C}{{\mathbb C}}
\newcommand{\R}{{\mathbb R}}
\newcommand{\N}{{\mathbb N}}
\newcommand{\cL}{\mathcal{L}}
\DeclareMathOperator{\diag}{diag}
\DeclareMathOperator{\dist}{dist}
\DeclarePairedDelimiter{\abs}{\lvert}{\rvert}
\DeclarePairedDelimiter{\cc}{[}{]}
\title{Computing the logarithmic capacity of compact sets having (infinitely)
many components with the Charge Simulation Method}
\author{J{\"o}rg Liesen\footnotemark[2] \and Mohamed M.S. Nasser\footnotemark[3]
\and Olivier S\`ete\footnotemark[4]}
\date{March 1, 2023}
\begin{document}
\maketitle

\renewcommand{\thefootnote}{\fnsymbol{footnote}}

\footnotetext[2]{Institute of Mathematics, Technische Universit\"{a}t
Berlin, Stra{\ss}e des 17. Juni 136, 10623 Berlin, Germany.
\texttt{liesen@math.tu-berlin.de}, ORCID: 0000-0002-3677-373X}

\footnotetext[3]{Department of Mathematics, Statistics, and Physics, Wichita 
State University, Wichita, KS 67260-0033, USA.
\texttt{mms.nasser@wichita.edu}, ORCID: 0000-0002-2561-0978}

\footnotetext[4]{Institute of Mathematics and Computer Science, Universit\"at
Greifswald, Walther-Rathenau-Stra\ss{}e 47, 17489 Greifswald, Germany.
\texttt{olivier.sete@uni-greifswald.de}, ORCID: 0000-0003-3107-3053}

\renewcommand{\thefootnote}{\arabic{footnote}}

\begin{abstract}
We apply the Charge Simulation Method (CSM) in order to compute the logarithmic
capacity of compact sets consisting of (infinitely) many ``small'' components.
This application allows to use just a single charge point for each 
component. The resulting method therefore is significantly more efficient 
than methods based on discretizations of the boundaries (for example, our own 
method presented in~\cite{LSN}), while maintaining a very high level of accuracy.  We study properties of the linear algebraic systems that arise in the CSM, and show how these systems can be solved efficiently using preconditioned iterative methods,
where the matrix-vector products are computed using the Fast Multipole Method.
We illustrate the use of the method on generalized Cantor sets and the Cantor
dust.
\end{abstract}

\noindent{\bf Keywords.}
Logarithmic capacity, Charge Simulation Method, Cantor set, Cantor dust, Fast
Multipole Method, GMRES method.

\medskip

\noindent{\bf AMS.}
65E05, 
30C85, 
31A15, 
65F10  

\pagestyle{myheadings}
\thispagestyle{plain}

\section{Introduction}
\label{sc:int}

As pointed out by Ransford and Rostand, the computation of the logarithmic
capacity of compact subsets of the complex plane is in general a ``notoriously
hard'' problem~\cite[p.~1499]{RanRos07}.
This is particularly true for non-connected sets consisting of many,
or even infinitely many components.
An important and frequently studied example in this context is given by the
classical Cantor middle third set and its generalizations, for which no analytic
formula for their logarithmic capacity is known.  Among the different approaches
to computing the logarithmic capacity of these fractal sets are the method
of Ransford and Rostand which uses linear programming~\cite{RanRos07}, a method
of Banjai, Embree and Trefethen based on Schwarz-Christoffel mappings (also
see~\cite{RanRos07}), the study of the spectral theory of orthogonal polynomials
by Kr{\"u}ger and Simon~\cite{KruSim15}, and our algorithm based on conformal
maps onto lemniscatic domains~\cite{LSN}.  Further numerical methods for
computing the logarithmic capacity of compact sets can be found, for example,
in
\cite{BaddooTrefethen2021,DijkstraHochstenbach2009, Ransford2010, Rostand1997}.

In this paper we derive and study an alternative method for
numerically approximating the logarithmic capacity of compact
sets consisting of very many ``small'' components, which uses
the \emph{Charge Simulation Method} (CSM)~\cite{Ama98,Ama10,Oka03a},
also known as the \emph{Method of Fundamental Solutions} \cite{aug15,wan19}.
In the CSM, the Green's function of the complement of the given compact set
is approximated by a linear combination of logarithmic potentials
which depend on so-called charge points inside each component of the set.
Solving the (dense) linear algebraic system for the coefficients of
the linear combination then yields the desired approximation of the
logarithmic capacity.

For the types of sets we consider, choosing one charge point in each component
is sufficient. A similar approach has been used in~\cite{KN}, where the CSM
was used to solve the harmonic image inpainting problem. The linear algebraic
system that needs to be solved therefore is significantly smaller
than in methods that are based on discretizing the boundaries of each component
(for example, our own Boundary Integral Equation (BIE) method presented
in~\cite{LSN}). We solve the linear
algebraic systems arising in the CSM iteratively with the
GMRES method~\cite{SaaSch86}. In order to speed
up the iteration we use the centrosymmetric structure of the
system matrices, the Fast Multipole Method~\cite{GR} for computing 
matrix-vector products, and a problem-adapted preconditioner. We apply the new 
method to generalized Cantor sets and the Cantor dust.  These sets consist of 
infinitely many components, and we compute approximations of their logarithmic 
capacities by extrapolating from computed logarithmic capacities of finite 
approximations that consist of many components.  Timing comparisons with our 
method from~\cite{LSN} show the computational efficiency, and comparisons of the
computed capacity values with other published results demonstrate the high
accuracy of the new method.

The paper is organized as follows.  In Section~\ref{sec:CSM} we present the
necessary background on the logarithmic capacity and the CSM.  In
Section~\ref{sec:Gen-Cantor} we study the approximation of the logarithmic
capacity for generalized Cantor sets, and in Section~\ref{sec:dust} we do the
same for the Cantor dust. The paper ends with concluding remarks
in Section~\ref{sec:Concl}.

\section{Logarithmic capacity and the CSM}\label{sec:CSM}

Let $E \subseteq \C$ be a compact set and denote $E^c := (\C \cup \{\infty\})
\setminus E$.  Suppose that the unbounded connected component $G$ of $E^c$ is 
regular in the sense that it possesses a \emph{Green's function} $g = g_E = 
g_G$.
Then the \emph{logarithmic capacity} of $E$, denoted by $\capp(E)$, satisfies
\begin{equation} \label{eqn:cap}
\capp(E) = \lim_{z \to \infty} \exp(\log \abs{z} - g(z));
\end{equation}
see, e.g., \cite[Theorem~5.2.1]{Ransford1995}, \cite[Eq.~(2.2)]{LSN}, or 
Szeg\H{o}'s original article~\cite{Szego1924}.
Recall that the Green's function with pole at infinity of $G$ is the real-valued
function such that
\begin{enumerate}
\item $g$ is harmonic in $G \setminus \{ \infty \}$ and $g(z) - \log \abs{z}$
is bounded in a neighborhood of infinity,
\item $g$ is continuous in $\overline{G} \setminus \{ \infty \}$ and vanishes
on $\partial G$.
\end{enumerate}
We assume that the boundary of the unbounded domain $G$ with $\infty \in G$
consists of a
finite number of Jordan curves, in which case the Green's function $g$ with pole
at infinity of $G$ exists~\cite[p.~41]{GarnettMarshall2005}.

In the CSM, a harmonic function is approximated by a
linear combination of fundamental solutions of the Laplace equation, which are
logarithmic potentials in the two-dimensional case~\cite{aug15,wan19}.
In our setting, the Green's function $g$ is approximated by a function of the 
form
\begin{equation} \label{eq:CSM_ansatz}
h(z) = c + \sum_{j=1}^N p_j \log|z-w_j|,  \quad z \in G,
\end{equation}
where $c, p_1, \ldots, p_N$ are undetermined real constants, and where $w_1,
\ldots, w_N \in \C \setminus \overline{G}$ are pairwise distinct.
The points $w_1, \ldots, w_N$ are called the \emph{charge points}, and the
coefficients $p_1, \ldots, p_N$ the \emph{charges}~\cite{Ama98}.
The function $h$ in~\eqref{eq:CSM_ansatz} is harmonic in $\C \setminus \{ w_1,
\ldots, w_N \}$.
Since $g$ behaves as $\log \abs{z}$ for $z \to \infty$, we require
\begin{equation} \label{eqn:sum_pj}
\sum_{j=1}^N p_j = 1.
\end{equation}
The coefficients $c, p_1, \ldots, p_N$ in~\eqref{eq:CSM_ansatz} are usually
determined from the boundary condition $g(z) = 0$ on $\partial G$,
by imposing the condition $h(z)=0$ in a finite set of collocation points on
$\partial G$; see, e.g.,~\cite{Ama98,
Ama10, aug15, Oka03a, wan19}.  The number of collocation points is usually at
least $N+1$, so that this procedure leads to a square or overdetermined linear
algebraic system~\cite[p.~12]{wan19}.
Below we will use a slightly different approach to obtain a linear algebraic
system for the coefficients.

In view of~\eqref{eqn:cap}, the approximation $h$ of $g$ yields
\begin{equation} \label{eqn:capE_approx}
\capp(E)
= \lim_{z \to \infty} \exp(\log \abs{z} - g(z))
\approx \lim_{z \to \infty} \exp(\log \abs{z} - h(z))
= e^{-c}.
\end{equation}
The following result gives an error bound for this approximation.

\begin{lemma}\label{lem:bd}
Let $E \subseteq \C$ be compact such that the unbounded connected component $G$
of $E^c$ is bounded by $m$ Jordan curves, and let $g$ be the Green's function
with pole at infinity of $G$.  Let $h$ be as in~\eqref{eq:CSM_ansatz} 
with~\eqref{eqn:sum_pj}, then
\begin{equation}
\abs{\capp(E) - e^{-c}}
\leq e^{-c} \Big( M + \frac{1}{2} M^2 e^M \Big), \label{eqn:estimate_capacity}
\end{equation}
where $M \coloneq \abs{\hat{c}-c} \leq \max_{\zeta \in \partial G}
\abs{h(\zeta)}$ and
$\hat{c} \coloneq \lim_{z \to \infty} (g(z) - \log \abs{z}) \in \R$.
\end{lemma}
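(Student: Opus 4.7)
The plan is to split the proof into two independent parts corresponding to the two inequalities implicit in the statement: (i) an elementary estimate for $|e^{-\hat{c}}-e^{-c}|$ in terms of $M=|\hat{c}-c|$, and (ii) a maximum-principle argument to bound $M$ by $\max_{\zeta\in\partial G}|h(\zeta)|$.

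For part (i), I would first identify $\capp(E)$ with $e^{-\hat{c}}$. This follows immediately from~\eqref{eqn:cap} together with the definition of $\hat{c}$, since $g(z)-\log|z|\to\hat{c}$ as $z\to\infty$ by items~1--2 in the definition of the Green function. Likewise, because $\sum_j p_j=1$, the ansatz~\eqref{eq:CSM_ansatz} yields $h(z)-\log|z|\to c$ as $z\to\infty$, which reproduces $e^{-c}=\lim_{z\to\infty}\exp(\log|z|-h(z))$. Then I write
\begin{equation*}
\abs{\capp(E)-e^{-c}} = \abs{e^{-\hat{c}}-e^{-c}} = e^{-c}\,\abs{e^{-(\hat{c}-c)}-1},
\end{equation*}
and apply Taylor's formula with Lagrange remainder to $x\mapsto e^{-x}$ at $0$ up to order two, giving $e^{-\delta}-1 = -\delta+\tfrac12\delta^2 e^{-\xi}$ for some $\xi$ between $0$ and $\delta$. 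Setting $\delta:=\hat{c}-c$ and using $|\xi|\leq M$ then gives $\abs{e^{-(\hat{c}-c)}-1}\leq M+\tfrac12 M^2 e^{M}$, which is exactly the bracketed factor in~\eqref{eqn:estimate_capacity}. This step is essentially a calculus exercise.

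For part (ii), the key observation is that the function $u:=g-h$ is harmonic in $G$ (since the charge points $w_j$ lie in $\C\setminus\overline{G}$), continuous on $\overline{G}\setminus\{\infty\}$, and admits the finite limit $\hat{c}-c$ at $\infty$ from the expansions used above. Moreover, on $\partial G$ we have $g\equiv 0$, so $u|_{\partial G}=-h|_{\partial G}$. This is the step I expect to require the most care, because the maximum principle on the unbounded domain $G$ must be invoked correctly: the finite limit at infinity lets us treat $\infty$ as an interior removable point (or, equivalently, compactify by a small neighborhood and pass to the limit), so that
\begin{equation*}
\sup_{z\in G}\abs{u(z)} \leq \max_{\zeta\in\partial G}\abs{u(\zeta)} = \max_{\zeta\in\partial G}\abs{h(\zeta)}.
\end{equation*}
Taking $z\to\infty$ on the left yields $M=\abs{\hat{c}-c}\leq\max_{\zeta\in\partial G}|h(\zeta)|$.

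Combining the two parts gives the claimed estimate. The only nontrivial point is the legitimacy of the maximum principle at infinity, which is why the assumption that $\partial G$ consists of $m$ Jordan curves (hence $G$ is a standard regular domain in $\widehat{\C}$) is essential; everything else reduces to the asymptotic expansions of $g$ and $h$ and a one-line Taylor bound.
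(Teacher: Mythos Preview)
Your proposal is correct and follows essentially the same approach as the paper: both reduce to applying the maximum principle to the harmonic function $g-h$ on $G$ (the paper's $u-v$ equals your $u=g-h$), then invoke Taylor's formula for $e^{-x}$ to obtain the bracketed factor. The only difference is that where you appeal abstractly to the removable singularity at $\infty$ to justify the maximum principle on the unbounded domain, the paper carries this out explicitly by subtracting $\log|z-w_1|$ and pulling back via the M\"obius map $\varphi(z)=1/(z-w_1)$ to a bounded region, so that the standard maximum principle on bounded domains applies directly.
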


\begin{proof}
As above, let $w_1\in\C\setminus \overline{G}$ be the first charge point.
The auxiliary functions
\begin{equation*}
u(z) = g(z) - \log \abs{z - w_1}, \quad
v(z) = h(z) - \log \abs{z - w_1},
\end{equation*}
are continuous in $\overline{G}$ and harmonic in $G$, including at infinity
with $u(\infty) = \hat{c}$ and $v(\infty) = c$.
Since $w_1 \notin \overline{G}$, the M\"obius transformation $\varphi(z) = 1/(z
- w_1)$ maps $G$ onto the bounded domain $R = \varphi(G)$, whose boundary
consists of $m$
Jordan curves.
The functions $u \circ \varphi^{-1}$ and $v \circ \varphi^{-1}$ are harmonic in
$R$ and continuous in $\overline{R}$.
Then, by the maximum principle for harmonic functions on bounded domains (see,
e.g.,~\cite[Theorem~1.1.8]{Ransford1995}),
\begin{equation*}
\abs{(u \circ \varphi^{-1})(w) - (v \circ \varphi^{-1})(w)}
\leq \max_{\omega \in \partial R} \,\abs{(u \circ \varphi^{-1})(\omega) - (v
\circ \varphi^{-1})(\omega)}, \quad w \in \overline{R}.
\end{equation*}
This yields
\begin{equation*}
\abs{u(z) - v(z)}
\leq \max_{\zeta \in \partial G} \abs{u(\zeta) - v(\zeta)}
= \max_{\zeta \in \partial G} \,\abs{g(\zeta) - h(\zeta)}
= \max_{\zeta \in \partial G} \,\abs{h(\zeta)}, \quad z \in \overline{G}.
\end{equation*}
The last equality holds since $g$ vanishes on $\partial G$.
Taking the limit $z\to\infty$ we obtain
$\abs{\hat{c} - c} \leq \max_{\zeta \in \partial G}\, \abs{h(\zeta)}$.
The estimate~\eqref{eqn:estimate_capacity} follows from $\capp(E) = e^{-
\hat{c}}$ and Taylor's formula.
\end{proof}

On the right hand side of~\eqref{eqn:estimate_capacity} we can replace
the (in general unknown) quantity $M$ by the upper bound given by the maximum
of the (known) function $h$ on $\partial G$.  In this way we obtain a
computable upper bound on the error. We will give an example in
Section~\ref{sec:two-disks}.

\begin{remark}
We give another interpretation of the approximation $e^{-c} \approx \capp(E)$
in the case $p_1, \ldots, p_N > 0$. Then $h(z)>0$ is equivalent to
$\prod_{j=1}^N \abs{z - w_j}^{p_j} > e^{-c}$. Since the real numbers
$p_1, \ldots, p_N>0$ are not necessarily rational, the set
$\cL = \{ z \in \C \cup \{ \infty \} : h(z) > 0 \}$ is the exterior of
a generalized lemniscate. This is an unbounded domain (with $\infty \in \cL$),
and $h$ is the Green's function with pole at infinity of $\cL$. In particular,
$e^{-c}$ is the logarithmic capacity of the compact set $\cL^c$.
If $\cL$ has connectivity $N$, it is a \emph{lemniscatic domain};
see~\cite{SeteLiesen2016, NasserLiesenSete2016} and references therein as well
as Walsh's original article~\cite{Walsh1956}.
Otherwise, the connectivity of $\cL$ is lower, and $\cL$ is the exterior of a
level curve of the Green's function of a lemniscatic domain.
Thus, in the method we propose here, the original domain $G = E^c$
is approximated by the domain $\cL$, and the method returns $\capp(\cL^c)$
as approximation of $\capp(G^c) = \capp(E)$.

If not all charges are positive, then the set $\{ z \in \C \cup \{ \infty \} :
h(z) > 0 \}$ is a canonical domain of the more general form
in~\cite[Theorem~3]{Walsh1956}.
\end{remark}

In the following two sections, we will apply the CSM in order to compute 
(approximations of) the logarithmic capacity of generalized Cantor sets 
(Section~\ref{sec:Gen-Cantor}) and the Cantor dust (Section~\ref{sec:dust}).

\section{Generalized Cantor sets}
\label{sec:Gen-Cantor}

In this section, we start with setting up the CSM for the generalized Cantor 
sets.  Next, we give an analytic example (illustrating this approach and 
Lemma~\ref{lem:bd}), and study
the structure and properties of the matrices arising in the CSM.  We then
show how the resulting linear algebraic systems can be solved iteratively,
and finally we present the results of numerical computations of the
logarithmic capacity of generalized Cantor sets.

\subsection{Setting up the CSM}
\label{sec:CSM_Cantor_set}

Fix some $q\in(0,1/2)$, let $E_0 \coloneq [0,1]$, and define recursively
\begin{equation}\label{eq:Ek}
E_k:=qE_{k-1}\cup\left( q E_{k-1}+ 1-q \right), \quad k\ge 1.
\end{equation}
Thus, the set $E_k$ is obtained by removing the middle $1-2q$ from each
interval of the set $E_{k-1}$; see Figure~\ref{fig:set} for $E_1$ (left) and
$E_2$ (right) corresponding to $q=1/4$.
Then the generalized Cantor set $E=E(q)$ is defined as
\begin{equation}\label{eq:E}
E \coloneq \bigcap_{k=0}^{\infty} E_k.
\end{equation}
For $q=1/3$ we obtain the classical Cantor middle third set. The limiting cases
are $E=\{0,1\}$ for $q=0$ and $E=[0,1]$ for $q=1/2$.

\begin{figure}
{\centering
\includegraphics[width=0.45\linewidth]{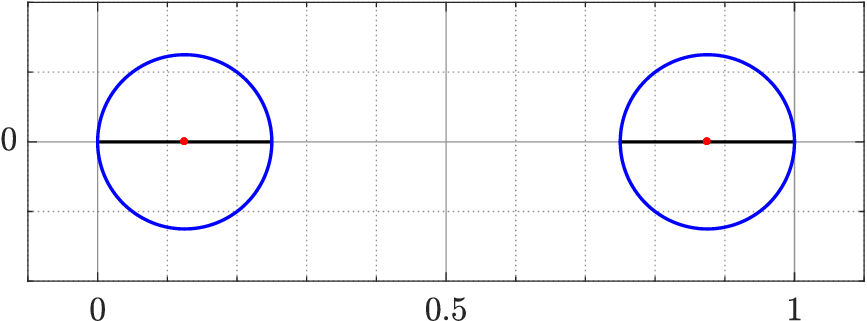}\hfill
\includegraphics[width=0.45\linewidth]{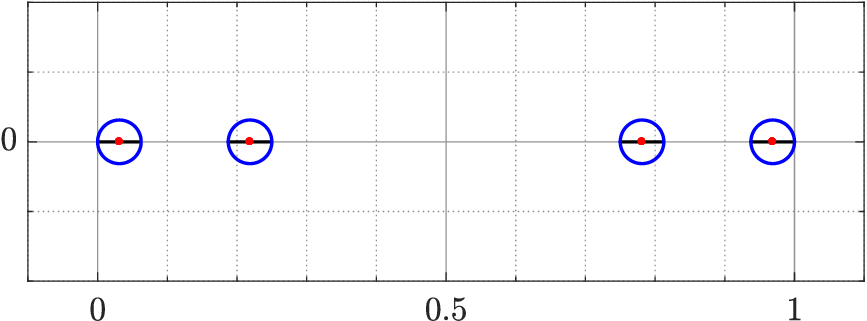}

}
\caption{The sets $E_1, D_1$ (left) and $E_2, D_2$ (right) for $q=1/4$.}
\label{fig:set}
\end{figure}

The set $E_k$ consists of $m = 2^k$ disjoint intervals $I_{k,j}$, $j = 1, 2, 
\ldots, m$, numbered from left to right.  The intervals have same length
\begin{equation}
\abs{I_{k,1}} = \abs{I_{k,2}} = \ldots = \abs{I_{k,m}} = q^k.
\end{equation}
Denote the midpoint of $I_{k,j}$ by $w_{k,j}$.  Let
\begin{equation}\label{eq:rk}
r_k = \frac{1}{2} q^k = w_{k,1},
\end{equation}
then
\begin{equation} \label{eqn:order_wk}
0 < r_k = w_{k,1} < w_{k,2} < \cdots < w_{k,m} = 1-r_k<1.
\end{equation}

Instead of the sets $E_k$ consisting of closed intervals we will use sets $D_k$
consisting of closed disks in the CSM. More precisely, let
\begin{equation*}
D_0 \coloneq \left\{z \in \C
\,:\, \abs*{z - \frac{1}{2}} \le\frac{1}{2} \right\},
\end{equation*}
and define recursively
\begin{equation*}
D_k:=qD_{k-1}\cup\left(qD_{k-1}+1-q\right), \quad k\ge 1.
\end{equation*}
Then $D_k$ consists of $m=2^k$ disjoint disks $D_{k,j}$ with the centers
$w_{k,j}$, $j=1,2,\ldots,m$, i.e.,
\begin{equation*}
D_k=\bigcup_{j=1}^m D_{k,j};
\end{equation*}
see Figure~\ref{fig:set} for $D_1$ (left) and $D_2$ (right) corresponding to
$q=1/4$.  All of the $m$ disks have the same radius $r_k$ from~\eqref{eq:rk}.
Since $E_0 = D_0 \cap \R$, we obtain $E_k = D_k \cap \R$ for all $k \geq 1$ by
induction.  Moreover, we have the following result.

\begin{theorem} \label{thm:E}
In the notation established above,
\[
\bigcap_{k=0}^{\infty} D_k = E
\quad \text{and} \quad
\capp(E) = \lim_{k \to \infty} \capp(D_k).
\]
\end{theorem}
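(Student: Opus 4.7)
The plan is to prove that $(D_k)_{k\ge 0}$ is a decreasing sequence of compact sets with intersection equal to $E$, and then to invoke the continuity from above of the logarithmic capacity for decreasing sequences of compact sets (see, e.g., \cite[Thm.~5.1.3]{Ransford1995}).

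For the monotonicity $D_{k+1}\subseteq D_k$ I would proceed by induction. The base case $D_1\subseteq D_0$ reduces to checking that the two sub-disks $qD_0$ and $qD_0+(1-q)$, each of radius $q/2$ with centers at distance $(1-q)/2$ from the center $1/2$ of $D_0$, lie inside the disk $D_0$ of radius $1/2$; this is immediate from the triangle inequality, since $q/2+(1-q)/2 = 1/2$. Applying the two similitudes $z\mapsto qz$ and $z\mapsto qz+(1-q)$ to the inductive inclusion $D_k\subseteq D_{k-1}$ then yields $D_{k+1}\subseteq D_k$.

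For the identity $\bigcap_k D_k = E$, the inclusion $E\subseteq\bigcap_k D_k$ follows at once from $E_0\subseteq D_0$ by induction, using that $E_k$ and $D_k$ are defined by the same recursions. Conversely, take $z\in\bigcap_k D_k$. For each $k$ there exists an index $j(k)$ with $z\in D_{k,j(k)}$, a closed disk of radius $r_k=q^k/2$ containing the interval $I_{k,j(k)}$. By the self-similarity $E=qE\cup(qE+(1-q))$, iterated $k$ times, the set $E\cap I_{k,j(k)}$ is a scaled affine copy of $E$ and is therefore non-empty, so there is a point $e_k\in E\cap D_{k,j(k)}$. Then $\abs{z-e_k}\le q^k\to 0$ as $k\to\infty$, and since $E$ is closed, $z\in E$. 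The capacity identity follows directly from the cited continuity result.

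The main obstacle I anticipate is precisely this reverse inclusion $\bigcap_k D_k\subseteq E$: a point of $\bigcap_k D_k$ need not lie on the real axis a priori, and one must combine the geometric shrinkage of the level-$k$ component disks with the non-emptiness of $E$ on each component, guaranteed by self-similarity, in order to force such a point into $E$.
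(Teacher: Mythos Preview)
Your proof is correct, and for the capacity limit you invoke exactly the same tool as the paper (\cite[Thm.~5.1.3]{Ransford1995}). The difference lies in how you handle the reverse inclusion $\bigcap_k D_k\subseteq E$. The paper argues by contrapositive with a two-case split: if $z\in\R\setminus E$ then $z\notin E_k=D_k\cap\R$ for some $k$; if $z\notin\R$ then $\abs{\Im z}>r_k$ for large $k$, so $z\notin D_k$. This exploits the specific geometry---the $D_k$ are unions of disks centred on $\R$ with radii $r_k\to 0$---and is very short. Your argument instead picks, for each $k$, a point $e_k\in E$ inside the component disk containing $z$ (using that each $I_{k,j}$ meets $E$, which is clear since its endpoints lie in $E$), and concludes $z\in E$ by closedness. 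Your route is slightly longer but more robust: it would go through unchanged for other covering shapes (e.g.\ the squares used for the Cantor dust in Section~\ref{sec:dust}), whereas the paper's imaginary-part trick is tailored to the real line. You also spell out the nesting $D_{k+1}\subseteq D_k$ via the IFS, which the paper simply asserts.
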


\begin{proof}
Clearly, $E \subseteq D \coloneq \bigcap_{k=0}^\infty D_k$.  Next, we show that
if $z \in \C \setminus E$, then $z \notin D$.  If $z \in \R \setminus E$, then
there exists $k \in \N$ such that
$z \notin E_k = D_k \cap \R \supseteq D \cap \R$, hence $z \notin D$.
If $z \in \C \setminus \R$, i.e., $\abs{\Im(z)} > 0$, then there exists $k \in
\N$ such that $0 < r_k < \abs{\Im(z)}$, which shows that $z \notin D_k
\supseteq D$.  Together we obtain $D = E$.
Finally, $\capp(E) = \lim_{k \to \infty} \capp(D_k)$
by~\cite[Theorem~5.1.3]{Ransford1995}, since $D_0 \supseteq D_1 \supseteq D_2
\supseteq \ldots$ are compact and $E = \bigcap_{k=0}^\infty D_k$.
\end{proof}

Our overall strategy for computing an approximation of $\capp(E)$, where the
set $E$ consists of infinitely many components, is to first approximate
$\capp(D_k)$ for reasonably many and large values of~$k$ with the CSM.  Then we
extrapolate from the computed approximations of $\capp(D_k)$ to obtain an
approximation of $\lim_{k \to \infty} \capp(D_k) = \capp(E)$.

The set $G_k \coloneq D_k^c = (\CC\cup\{\infty\}) \setminus D_k$ is an unbounded
multiply connected domain of connectivity $m$ with
\begin{equation*}
\partial G_k = C_{k,1}\cup\cdots\cup C_{k,m},
\end{equation*}
where $C_{k,j} \coloneq \partial D_{k,j}$ is the circle with center $w_{k,j}$ 
and radius $r_k$ for $j=1,2,\ldots,m$.

As described in Section~\ref{sec:CSM}, we approximate $g_k$, the Green's 
function with pole at infinity of $G_k$, with the CSM by
\begin{equation} \label{eqn:approx_Green_function}
h_k(z) = c_k + \sum_{\ell = 1}^m p_{k,\ell} \log \abs{z - w_{k, \ell}}, \quad
z \in G_k,
\end{equation}
with
\begin{equation} \label{eq:sum-p}
\sum_{\ell=1}^m p_{k,\ell} = 1.
\end{equation}
In the CSM for unbounded multiply connected domains, we usually choose many
charge points inside each boundary component $C_{k,j}$,
$j=1,2,\ldots,m$~\cite{Ama10,Oka03a}.  However, because $r_k$ is very small for
large $m=2^k$, we choose only one point inside $C_{k,j}$, which is its center
$w_{k,j}$.

\begin{remark}\label{rmk:savings}
The fact that we use just \emph{a single charge point for each boundary
component} $C_{k,j}$ is an essential difference to other discretization-based
methods for computing the logarithmic capacity of sets consisting of many
components.  For example, our own BIE method presented in~\cite{LSN} ``opens
up''
the intervals of $E_k$ to obtain a compact set of the same capacity, but
bounded by smooth Jordan curves.  The computation of the logarithmic capacity
then is based on discretizing the $m=2^k$ boundary curves using $n$ points on
each of them.  Consequently, for the same
$k$ the linear algebraic systems in the method presented here are smaller by a
factor of~$n$ compared to those in~\cite{LSN}.  This is one of the main reasons
for the very significant computational savings that we obtain with the new
method; see Section~\ref{sec:NumExGenC}.
Using only a single charge point for each boundary component can be
justified by the fact that the boundary components become very small when $k$
increases. This is illustrated in~\cite{KN} where, as mentioned in the
Introduction, a similar approach has been used. (In terms of the
current paper, the derivation of the system matrix in~\cite{KN} assumes that
the value of $r_k$ in the equation~\eqref{eqn:bd_cond} is negligible in
comparison with $|w_{k,j}-w_{k,\ell}|$ for $j\ne\ell$.)
\end{remark}

The Green's function $g_k$ of $G_k$ vanishes on the boundary $\partial G_k$.  
This does not hold for the approximation $h_k$.  Instead, we require that $h_k$ 
has zero mean on each circle $C_{k,j}$, i.e.,
\begin{equation} \label{eqn:hk_zero_in_the_mean}
\frac{1}{2 \pi \i} \int_0^{2 \pi} h_k(\eta_{k,j}(t)) \, \d t = 0, \quad j = 1, 
\ldots, m,
\end{equation}
with the parametrization $\eta_{k,j}(t)=w_{k,j}+r_k e^{\i t}$, $0 \le t \le 2 
\pi$.
On the circle $C_{k,j}$,
\begin{align}
h_k(\eta_{k,j}(t))
&= c_k + \sum_{\ell = 1}^m p_{k,\ell} \log \abs{w_{k,j} + r_k e^{\i t} - w_{k,
\ell}}, \notag \\
&= c_k + p_{k,j} \log r_k + \sum_{{\begin{subarray}{l}\ell=1 \\ \ell\ne
j\end{subarray}}}^m p_{k,\ell} \log \abs{w_{k,j} + r_k e^{\i t} - w_{k,
\ell}}, \quad j = 1, \ldots, m,
\label{eqn:bd_cond}
\end{align}
for all $t \in \cc{0, 2 \pi}$.  For $\ell \neq j$, the function $\log \abs{z -
w_{k,\ell}}$ is harmonic in the disk $D_{k,j}  = \{ z \in \C : \abs{z-w_{k,j}}
\leq r_k \}$, hence
\begin{equation*}
\frac{1}{2\pi}\int_{0}^{2\pi}\log|w_{k,j}+r_ke^{ \i t}-w_{k,\ell}| \, \d t
= \log \abs{w_{k,j} - w_{k, \ell}}
\end{equation*}
by the mean value property of harmonic functions; see,
e.g.,~\cite[Theorem~4.6.7]{Wegert2012} or~\cite[Theorem~1.1.6]{Ransford1995}.
Thus, integrating~\eqref{eqn:bd_cond} and 
requiring~\eqref{eqn:hk_zero_in_the_mean} yields the linear algebraic system
\begin{equation}\label{eq:sys-5}
c_k+p_{k,j}\log r_k+\sum_{{\begin{subarray}{l}\ell=1 \\ \ell\ne
j\end{subarray}}}^{m}p_{k,\ell}\log|w_{k,j}-w_{k,\ell}|=0, \quad j=1,2,\ldots,m.
\end{equation}

We will now consider~\eqref{eq:sys-5} for a fixed size $m=2^k$, and therefore
we will drop the index~$k$ in the following for simplicity.  We
write~\eqref{eq:sys-5} in the form
\begin{equation}\label{eq:sys-5a}
A\mathbf{p}=c\,\mathbf{e},
\end{equation}
where $\mathbf{p}=[p_{1},\dots,p_{m}]^T$, $\mathbf{e}=[1,\dots,1]^T\in\R^m$, and
\begin{equation}\label{eq:A}
A = -
\begin{bmatrix}
\log r &\log|w_{1}-w_{2}| &\cdots &\log|w_{1}-w_{m-1}| &\log|w_{1}-w_{m}|  \\
\log|w_{2}-w_{1}| &\log r &\cdots &\log|w_{2}-w_{m-1}| &\log|w_{2}-w_{m}| \\
\vdots &\vdots                   &\ddots &\vdots              &\vdots \\
\log|w_{m-1}-w_{1}| &\log|w_{m-1}-w_2|  &\cdots &\log r  &\log|w_{m-1}-w_m| \\
\log|w_{m}-w_{1}| &\log|w_{m}-w_{2}|  &\cdots &\log|w_{m}-w_{m-1}| &\log r \\
\end{bmatrix}.
\end{equation}
Note that $A \in \R^{m,m}$ and that $m = 2^k$ is even.

\begin{theorem} \label{thm:A}
For $k \geq 1$, the entries $a_{ij}$ of $A$ satisfy
\begin{equation} \label{eqn:bound_aij}
\log \frac{1}{1 - 2r} \leq a_{ij} \leq \log \frac{1}{r}, \quad 1 \leq i, j \leq
m,
\end{equation}
and they decay away from the diagonal in each row and column, i.e.,
\begin{align}
a_{i,1} < \ldots < a_{i, i-1} < a_{i,i}, \quad
a_{i,i} > a_{i, i+1} > \ldots > a_{i,m}, \label{eqn:decay_row_A} \\
a_{1,j} < \ldots < a_{j-1, j} < a_{j,j}, \quad
a_{j,j} > a_{j+1, j} > \ldots > a_{m,j}, \label{eqn:decay_col_A}
\end{align}
for $1 \leq i, j \leq m$.
\end{theorem}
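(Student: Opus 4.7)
The plan is to exploit the symmetry $a_{ij} = -\log|w_i - w_j| = a_{ji}$, so that the column decay~\eqref{eqn:decay_col_A} is an immediate consequence of the row decay~\eqref{eqn:decay_row_A}, and then reduce everything to three elementary facts: (i) the ordering $r = w_1 < w_2 < \cdots < w_m = 1-r$ from~\eqref{eqn:order_wk}, (ii) the pairwise disjointness of the closed disks $D_{k,j}$, which gives $|w_i - w_j| \geq 2r$ for $i \neq j$, and (iii) the size bound $r = q^k/2 < 1/4$, valid for $q \in (0, 1/2)$ and $k \geq 1$.

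For the bounds~\eqref{eqn:bound_aij} I would split into two cases. If $i = j$, then $a_{ii} = -\log r = \log \frac{1}{r}$ saturates the upper bound, while the lower bound $\log \frac{1}{1-2r} \leq \log \frac{1}{r}$ reduces to $r \leq 1-2r$, which holds by (iii). If $i \neq j$, disjointness gives $|w_i - w_j| \geq 2r > r$, so $a_{ij} < \log \frac{1}{r}$, and the containment $w_i, w_j \in [r, 1-r]$ yields $|w_i - w_j| \leq w_m - w_1 = 1-2r$, hence $a_{ij} \geq \log \frac{1}{1-2r}$.

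For the row decay~\eqref{eqn:decay_row_A}, fix $i$ and consider $j < i$. Since $-\log$ is strictly decreasing on $(0,\infty)$, it suffices to show that $|w_i - w_j| = w_i - w_j$ strictly increases as $j$ decreases from $i-1$ to $1$, which is immediate from (i); this yields $a_{i,1} < \cdots < a_{i,i-1}$. To bridge to the diagonal I use $w_i - w_{i-1} \geq 2r > r$ from (ii), which gives $a_{i,i-1} < a_{i,i}$. The case $j > i$ is entirely symmetric, and~\eqref{eqn:decay_col_A} then drops out from $A = A^T$.

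I do not foresee a genuine obstacle: the whole proof is a routine deduction from the explicit coordinates of the centers and the disjointness of the disks. The only subtlety worth flagging is the diagonal case of the lower bound in~\eqref{eqn:bound_aij}, which depends on the non-obvious inequality $r \leq 1-2r$ and thus implicitly on the hypothesis $k \geq 1$ via $r = q^k/2 < 1/4$; without it, the diagonal value $\log \frac{1}{r}$ could fall below the claimed bound $\log \frac{1}{1-2r}$.
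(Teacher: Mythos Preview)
Your proposal is correct and follows essentially the same route as the paper: both arguments derive the row decay from the strict ordering of the centers and the fact that consecutive centers are more than $r$ apart, then invoke symmetry for the columns, and both obtain the bounds from $a_{ii}=\log\frac{1}{r}$ together with $|w_i-w_j|\le 1-2r$. The only cosmetic difference is that the paper gets the diagonal lower bound by appealing to the already-established decay ($a_{ii}$ is the row maximum), whereas you verify $r\le 1-2r$ directly via $r<1/4$; either works.
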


\begin{proof}
Since $0 < r = w_1 < w_2 < \ldots < w_m$, we have that
\begin{equation*}
\abs{w_i - w_1} > \abs{w_i - w_2} > \ldots > \abs{w_i - w_{i-1}} > r,
\quad
r < \abs{w_i - w_{i+1}} < \ldots < \abs{w_i - w_m},
\end{equation*}
which is equivalent to~\eqref{eqn:decay_row_A}.
Since $A$ is symmetric, \eqref{eqn:decay_row_A} is equivalent
to~\eqref{eqn:decay_col_A}.

Let $1 \leq i, j \leq m$.
The upper bound in~\eqref{eqn:bound_aij} follows from $a_{ij} \leq a_{ii} =
\log(1/r)$.
For the lower bound, notice that $\abs{w_i - w_j} \leq 1 - 2r$
by~\eqref{eqn:order_wk}, and that $1 - 2r > 0$ if and only if $k \geq 1$.
We then obtain $a_{ij} \geq \log \frac{1}{1 - 2r}$ for $i \neq j$, which also
holds for $a_{ii}$, since $a_{ii}$ is the largest element in the row.
\end{proof}

We will continue under the assumption that $A$ is nonsingular, which will be
justified below; see Section~\ref{sect:system} as well as Figure~\ref{fig:cond} 
and the corresponding discussion.
Note that~\eqref{eq:sum-p} can be written as $\mathbf{e}^T\mathbf{p}=1$, and
therefore multiplying~\eqref{eq:sys-5a} from the left with $\mathbf{e}^TA^{-1}$
yields
$1 = \mathbf{e}^T\mathbf{p} = c\,\mathbf{e}^T A^{-1} \mathbf{e}$ or, 
equivalently,
\begin{equation}\label{eq:c-A}
c=\frac{1}{\mathbf{e}^TA^{-1}\mathbf{e}}.
\end{equation}
Thus, in order to compute $e^{-c}$, which is our approximation of $\capp(E)$
(see \eqref{eqn:capE_approx}), we need to compute (or accurately estimate) the
quantity $\mathbf{e}^TA^{-1}\mathbf{e}$, preferably without explicitly
inverting the full matrix $A$.
One option is to numerically solve the linear algebraic system
\begin{equation}\label{eqn:Ax-e}
A\mathbf{x}=\mathbf{e},
\end{equation}
and then to compute $c = 1/(\mathbf{e}^T \mathbf{x})$.

\subsection{An analytic example: Two disks}\label{sec:two-disks}

In this section, we study the accuracy of the CSM approximation and the
tightness of the bounds in Lemma~\ref{lem:bd} on a simple example where an
exact formula for the logarithmic capacity is known.  We consider the set $D_1$
consisting of the union of the two disks with the centers
\[
w_1=\frac{1}{6}, \quad w_2=\frac{5}{6},
\]
and the radius $r$, where $0<r\le w_1=1/6$.
Denote by
\begin{equation*}
G=\{ z : |z-1/6|>r \text{ and } |z-5/6|>r \}
\end{equation*}
the complement of $D_1$ in the extended complex plane. Then $z \mapsto
z-\frac{1}{2}$ maps $G$ onto the domain
$\widehat G=\{z\,:\,|z+1/3|>r \text{ and } |z-1/3|>r\}$, and it follows
from~\cite[Theorem~5.2.3]{Ransford1995} and~\cite[Theorem~4.2]{SeteLiesen2016}
(see also~\cite[Example~4.6]{LSN}) that
\begin{equation}\label{eq:cap-D1}
\capp(D_1)=\capp(\widehat G^c)
=\frac{2K}{\pi}\sqrt{(1/3)^2-r^2}\sqrt{2L(1+L^2)}
=\frac{2K}{3\pi}\sqrt{1-9r^2}\sqrt{2L(1+L^2)},
\end{equation}
where
\[
K = K(L^2) = \int_0^1 \frac{1}{\sqrt{(1-t^2) (1-L^4 t^2)}} \, \d t, \quad
L = 2 \rho \prod_{k=1}^\infty \left( \frac{1 + \rho^{8k}}{1 + \rho^{8k-4}}
\right)^2,
\]
and
\[
\rho = \frac{\sqrt{1/3 + r} - \sqrt{1/3 - r}}{\sqrt{1/3 + r} + \sqrt{1/3 - r}}
= \frac{\sqrt{1 + 3r} - \sqrt{1 - 3r}}{\sqrt{1 + 3r} + \sqrt{1 - 3r}}
=\frac{3r}{1 + \sqrt{1 - 9r^2}}.
\]
Thus, for $r\ll 1/6$ we have
\[
\rho\approx 3r/2, \quad L\approx 2\rho\approx 3r\ll 1,
\]
hence
\[
K = K(L^2) \approx K(0)=\frac{\pi}{2},
\]
and now~\eqref{eq:cap-D1} implies that
\begin{equation}\label{eq:cap-ap-ex}
\capp(D_1) \approx
\frac{1}{3}\times 1\times \sqrt{2L} \approx
\frac{1}{3} \sqrt{6r} =\sqrt{\frac{2r}{3}} \quad\mbox{(for $r\ll 1/6$).}
\end{equation}

For the CSM we set up the linear algebraic system \eqref{eqn:Ax-e} with the
system matrix
\begin{equation*}
A= -
\begin{bmatrix}
\log r            &\log|w_1-w_2|\\
\log|w_2-w_1|     &\log r           \\
\end{bmatrix}
 =
\begin{bmatrix}
-\log r            &-\log\frac{2}{3}\\
-\log\frac{2}{3} &-\log r           \\
\end{bmatrix}
\end{equation*}
from~\eqref{eq:A}, so that
\begin{equation*}
A^{-1} = \frac{1}{(\log r)^2- (\log \frac{2}{3})^2}
\begin{bmatrix}
-\log r & \log \frac{2}{3}\\
\log \frac{2}{3} &- \log r \\
\end{bmatrix},
\end{equation*}
and hence (see~\eqref{eq:c-A})
\begin{equation}\label{eq:cap-c}
c=-\frac{(\log r)^2- (\log\frac{2}{3})^2}{2(\log r - \log \frac{2}{3})}=
-\frac{1}{2}\left(\log r + \log\frac{2}{3}\right)
= - \frac{1}{2} \log \frac{2 r}{3}.
\end{equation}
Consequently, the CSM estimate is
\begin{equation}\label{eq:cap-ap-ap}
\capp(D_1)\approx\exp\left(-c\right)=\exp\left( \frac{1}{2} \log \frac{2r}{3}
\right)
=\sqrt{\frac{2r}{3}}.
\end{equation}
Comparing~\eqref{eq:cap-ap-ex} and~\eqref{eq:cap-ap-ap} shows that the CSM
gives very accurate results for $r \ll 1/6$.  This conclusion is illustrated in
Figure~\ref{fig:twodisks}, where the (blue) solid line shows that the error
between the exact capacity~\eqref{eq:cap-D1} and the CSM
estimate~\eqref{eq:cap-ap-ap}
is $2.87\times10^{-18}$ for $r=10^{-7}$, but increases to $0.01$ for $r=1/6$.

We next consider the bound~\eqref{eqn:estimate_capacity} from
Lemma~\ref{lem:bd}, i.e., the inequality
\begin{equation}\label{eq:bd-again}
\abs{\capp(D_1) - e^{-c}}
\leq e^{-c} \Big( M + \frac{1}{2} M^2 e^M \Big),
\end{equation}
where $M = \abs{\hat{c} - c} \leq \max_{\zeta \in \partial G}
\abs{h(\zeta)}=\max_{\zeta \in \partial D_1} \abs{h(\zeta)}$, and
\[
h(z) = c + p_1 \log|z-w_1| + p_2\log|z-w_2|,  \quad z \in G.
\]
Here, $c$ is given by~\eqref{eq:cap-c}, $\capp(D_1)$ is given
by~\eqref{eq:cap-D1}, $\hat{c} = - \log(\capp(D_1))$, and $[p_1,p_2]^T$ is the
solution of the linear algebraic system~\eqref{eq:sys-5a}, i.e.,
\[
\begin{bmatrix}
-\log r & -\log\frac{2}{3} \\
-\log\frac{2}{3} & - \log r \\
\end{bmatrix}
\begin{bmatrix} p_1 \\ p_2 \end{bmatrix}
= \begin{bmatrix} c \\ c \end{bmatrix}.
\]
Thus, in view of~\eqref{eq:cap-c}, we have $p_1=p_2=1/2$, and hence
\[
h(z) = \frac{1}{2}\log \frac{3}{2r} + \frac{1}{2} \log \abs*{z-\frac{1}{6}} +
\frac{1}{2} \log \abs*{z-\frac{5}{6}},
\]
which yields
\[
M \leq \max_{\zeta \in \partial D_1} \abs{h(\zeta)}
= \frac{1}{2}\log
\frac{3}{2}+\frac{1}{2}\log\left(r+\frac{2}{3}\right)
= \frac{1}{2}\log\left(\frac{3}{2}r+1\right).
\]
Using this upper bound on $M$ in \eqref{eq:bd-again} we obtain
\begin{equation}\label{eq:up-hM}
\abs{\capp(D_1) - e^{-c}} 
\leq
\sqrt{\frac{r}{6}} \log\left(\frac{3}{2}r+1\right)\left( 1 + \frac{1}{4}
\log\left(\frac{3}{2}r+1\right) \sqrt{\frac{3}{2}r+1} \right) =: \hat{M}.
\end{equation}
The values of the computable upper bound $\hat{M}$ on the absolute error are
shown by the dotted line in Figure~\ref{fig:twodisks}. We observe that for
larger values of $r$ the bound becomes quite tight.
Figure~\ref{fig:twodisks} also shows the error bound $e^{-c} (M + \frac{1}{2}
M^2 e^M)$, which we can compute in this example since $\capp(D_1)$ is known
analytically. Clearly, the bound is very tight in this example.

\begin{figure}
\centerline{
\scalebox{0.55}{\includegraphics[trim=0cm 0cm 0cm 0cm,clip]
{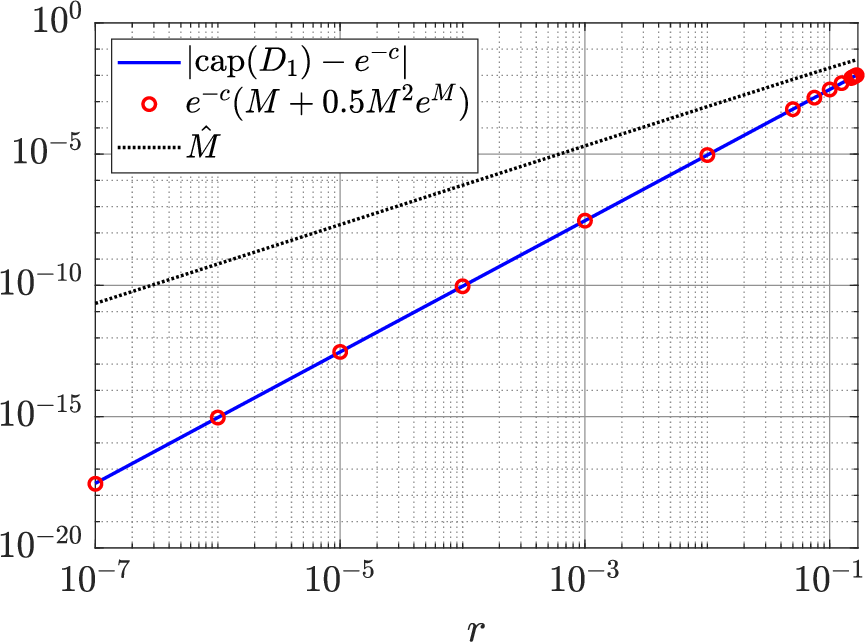}}
}
\caption{The absolute error $\abs{\capp(D_1) - e^{-c}}$ between the exact
capacity $\capp(D_1)$ given by~\eqref{eq:cap-D1} and the CSM estimate $e^{-c}$
in~\eqref{eq:cap-ap-ap}, the error bound from Lemma~\ref{lem:bd},
and the upper bound $\hat M$ in~\eqref{eq:up-hM}.}
\label{fig:twodisks}
\end{figure}

\subsection{Structure and properties of the system matrices}
\label{sect:system}

In Section~\ref{sec:two-disks} we have considered the case of just two disks,
and we were able to invert the $(2\times 2)$-matrix of the linear algebraic
system \eqref{eqn:Ax-e} explicitly.  For approximating the logarithmic capacity
of the generalized Cantor sets by extrapolating from the values $\capp(D_k)$
for reasonably large~$k$, we will have
to solve much larger linear algebraic systems, and thus we need to apply
more sophisticated techniques for the numerical solution of \eqref{eqn:Ax-e}.

Our first observation in this direction is that the matrix $A$ in~\eqref{eq:A}
is symmetric as well as centrosymmetric, which means that
\begin{equation*}
J_mAJ_m=A,\quad\mbox{where}\quad J_m=\begin{bmatrix} & & 1\\ & \udots & \\ 1 &
& \end{bmatrix}\in\mathbb{R}^{m\times m}.
\end{equation*}
Because of the centrosymmetry, the matrix~$A$ can be block-diagonalized with an
orthogonal transformation at no additional cost; see~\cite{FasIkr03}.  If we
partition
\begin{equation*}
A = \begin{bmatrix} A_{11} & A_{12}\\ A_{21} & A_{22} \end{bmatrix}, 
\quad\mbox{where}\quad
A_{11},A_{22}\in \mathbb{R}^{\frac{m}{2}\times \frac{m}{2}},
\end{equation*}
then
\begin{equation*}
A=Q\begin{bmatrix} B & 0\\ 0 & C\end{bmatrix}Q^T,\quad\mbox{where}\quad
Q=\frac{1}{\sqrt{2}}\begin{bmatrix}I_{m/2} & I_{m/2}\\ J_{m/2} &
-J_{m/2}\end{bmatrix} \quad \mbox{with}\quad Q^TQ=QQ^T=I_m,
\end{equation*}
and
\begin{equation} \label{eq:B}
B=A_{11}+A_{12}J_{m/2},\quad C=A_{11}-A_{12}J_{m/2}.
\end{equation}
Since $A$ is symmetric, the matrices $B,C\in\mathbb{R}^{\frac{m}{2}\times
\frac{m}{2}}$ are also symmetric.  If we partition
\begin{equation*}
\mathbf{x}=\begin{bmatrix}\mathbf{x}_1\\\mathbf{x}_2\end{bmatrix},
\end{equation*}
and use the orthogonal decomposition of $A$, then \eqref{eqn:Ax-e} can be
transformed via a left-multiplication with $Q^T$ into the equivalent system
\begin{equation*}
\begin{bmatrix} B  & 0 \\ 0 & C \end{bmatrix}
\begin{bmatrix}\mathbf{x}_1+J_{m/2}\mathbf{x}_2\\
\mathbf{x}_1-J_{m/2}\mathbf{x}_2 \end{bmatrix}=
\begin{bmatrix}2 {\mathbf{e}}\\ \mathbf{0}\end{bmatrix},
\end{equation*}
where $\mathbf{e}=[1,\dots,1]^T\in\R^{m/2}$.
Since we assume that $A$ and thus $C$ is nonsingular, the second block row
implies that $\mathbf{x}_1=J_{m/2}\mathbf{x}_2$, and hence it remains to solve
the system
\begin{equation}\label{eqn:B1-system}
B \mathbf{y}=\mathbf{e},
\end{equation}
where $\mathbf{y} = \mathbf{x}_1$, to obtain $c = 1/(2 \mathbf{e}^T 
\mathbf{y})$.  In finite precision, we obtain a computed approximate solution 
$\widetilde{\mathbf{y}} \approx \mathbf{y}$ of~\eqref{eqn:B1-system}, which 
leads to
\begin{equation*}
c \approx \frac{1}{2\mathbf{e}^T\widetilde{\mathbf{y}}}.
\end{equation*}

Figure~\ref{fig:cond} shows the $2$-norm condition numbers computed in
MATLAB\footnote{All computations in this paper are performed in MATLAB R2017a
on an ASUS Laptop with Intel Core i7-8750H CPU @ 2.20GHz, 2208 Mhz, 6 Cores,
12 Logical Processors and 16 GB RAM.} of
$A$ and $B$ for $q=1/3$ (i.e., the classical Cantor middle third set) as
functions of $m = 2^k$.  We observe that the condition
numbers grow linearly in $m$.
A similar behavior of the condition numbers
can be seen in the Cantor dust example (see Figure~\ref{fig:condd}), and has
been observed in~\cite[Figure~6]{HelWad05}, where the matrix $F$
in~\cite[Eq.~(36)]{HelWad05} has the same form as our matrices $A$ and $B$.
According to~\cite[p.~398]{HelWad05}, such behavior of the condition numbers in
the CSM is expected, since the matrices ``resemble a discretization of the
kernel of a single-layer potential whose inverse is the Laplacian operator''.
Analyses of the invertibility of matrices in other applications of the CSM can
be found in~\cite{OgaOkaSugAma03}.

\begin{figure}
{\centering
\includegraphics[width=0.5\linewidth]{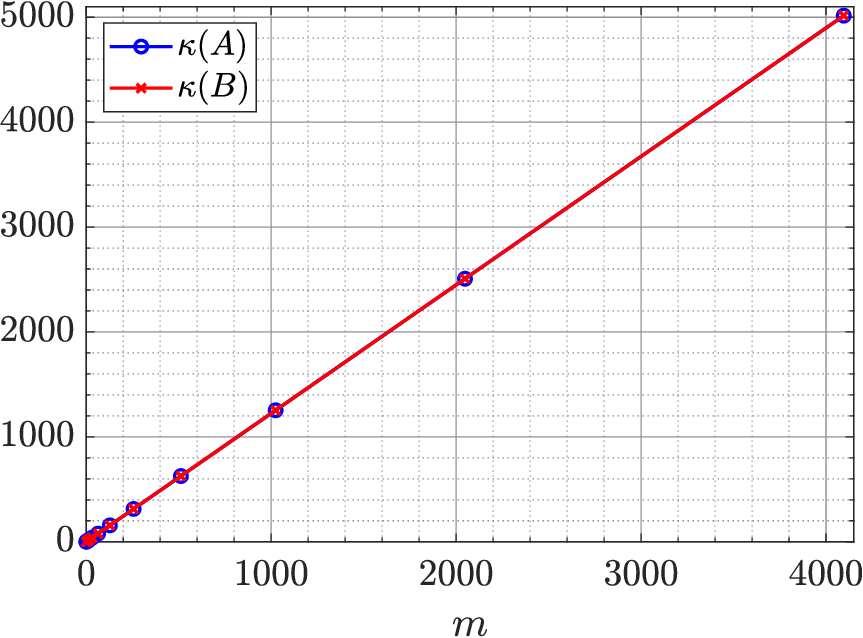}

}
\caption{2-norm condition numbers of $A \in \R^{m,m}$ and $B \in \R^{m/2, m/2}$
from~\eqref{eq:A} and~\eqref{eq:B} for $q=1/3$ as functions of $m = 2^k$, $k =
2,3, \ldots, 12$.} \label{fig:cond}
\end{figure}

From a numerical point of view, solving~\eqref{eqn:B1-system} is clearly
preferable over solving~\eqref{eqn:Ax-e}, since $B$ has only half the size of
$A$, while both matrices are dense and have essentially the same condition
number (cf. Figure~\ref{fig:cond}).  We will solve the linear algebraic
system~\eqref{eqn:B1-system} using iterative methods that require one
matrix-vector product with~$B$ in every step; see Section~\ref{sec:GMRES}.
Because of the structure of the entries of~$B$, this multiplication can
be performed using the Fast Multipole Method (FMM)~\cite{GR}.  Using the
definition of $B$ in~\eqref{eq:B}, each matrix-vector product of the form
$B{\bf y} = A_{11}{\bf y}+A_{12}J_{m/2}{\bf y}$ requires two applications of
the FMM.  The following result shows that $B$ can be written in a form so that
only one application of the FMM is required.

\begin{lemma} \label{lem:B_entries}
The entries of $B$ are given by
\[
b_{ij} =
\begin{cases}
-\log(2r\sqrt{z_{i}}), &  i=j,\\
-\log|z_{i}-z_{j}|, & i\ne j,\quad 1\le i,j\le m/2,
\end{cases}
\]
where $z_{i} \coloneq (w_{i}-1/2)^2$ for $i=1,\dots, m/2$.
\end{lemma}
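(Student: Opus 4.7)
The plan is to read off $b_{ij}$ directly from the definition $B = A_{11} + A_{12} J_{m/2}$ in~\eqref{eq:B} and then collapse the two logarithms into one by exploiting the reflection symmetry of the centers $w_1, \ldots, w_m$ about $1/2$.

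First I would note that, by block structure, $(A_{11})_{ij} = a_{ij}$ and $(A_{12})_{ij} = a_{i, j + m/2}$ for $1 \le i, j \le m/2$. Since $J_{m/2}$ reverses column order, $(A_{12} J_{m/2})_{ij} = a_{i, m+1-j}$. Thus
\begin{equation*}
b_{ij} = a_{ij} + a_{i, m+1-j}, \quad 1 \le i, j \le m/2.
\end{equation*}

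The key ingredient is the reflection identity $w_{m+1-j} = 1 - w_j$, which follows from the fact that the generalized Cantor construction in~\eqref{eq:Ek} is symmetric about $1/2$, so the sets $D_k$ and their center configurations $\{w_{k,1}, \ldots, w_{k,m}\}$ are invariant under $z \mapsto 1 - z$; combined with the left-to-right ordering in~\eqref{eqn:order_wk}, this forces pairing of $w_j$ with $w_{m+1-j}$. With this, and writing $u_i \coloneq w_i - 1/2$ so that $z_i = u_i^2$, I would observe that for $i \neq j$,
\begin{equation*}
|w_i - w_j| \cdot |w_i - w_{m+1-j}|
= |w_i - w_j| \cdot |w_i + w_j - 1|
= |u_i - u_j| \cdot |u_i + u_j|
= |u_i^2 - u_j^2|
= |z_i - z_j|,
\end{equation*}
so combining the two logarithms gives $b_{ij} = -\log|z_i - z_j|$.

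For the diagonal case $i = j$, the term $a_{ii} = -\log r$ is not a distance, so it must be treated separately. Here $b_{ii} = -\log r - \log|w_i - w_{m+1-i}| = -\log r - \log|2u_i| = -\log(2r\sqrt{z_i})$, using $|u_i| = \sqrt{z_i}$. I do not anticipate a substantive obstacle; the only thing to double-check is the symmetry $w_{m+1-j} = 1 - w_j$, which is immediate from the self-similar recursion~\eqref{eq:Ek} together with the ordering~\eqref{eqn:order_wk}, and the fact that $m/2 < m+1-j \le m$ whenever $1 \le j \le m/2$, so that the index $m+1-j$ always reaches into the second half and the off-diagonal formula for $a$ applies.
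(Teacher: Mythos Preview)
Your proof is correct and follows essentially the same route as the paper: compute $b_{ij}=(A_{11})_{ij}+(A_{12}J_{m/2})_{ij}$ entrywise, use the symmetry of the centers about $1/2$ to rewrite the second summand as $-\log|w_i+w_j-1|$, and then factor the product of absolute values as $|z_i-z_j|$. The only cosmetic difference is that the paper invokes the two self-similar relations $w_{m/2+j}=1-q+w_j$ and $w_{m/2+1-j}+w_j=q$ to reach $-\log|w_i+w_j-1|$, whereas you use the equivalent global reflection $w_{m+1-j}=1-w_j$ directly; the remaining algebra is identical.
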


\begin{proof}
First note that by definition the entries of $A_{11}$ are given by
\[
a_{ij} =
\begin{cases}
-\log r, &  i=j,\\
-\log|w_{i}-w_{j}|, & i\ne j,\quad 1\le i,j\le m/2,
\end{cases}
\]
and the entries of $A_{12}$ are given by
\[
\hat a_{ij} = -\log|w_{i}-w_{m/2+j}|, \quad 1\le i,j\le m/2.
\]
By construction, the $w_{j}$, $j=1,2,\ldots,m$, are real numbers in the
interval $(0,1)$, which are symmetric about $1/2$.  Further, we have
\begin{equation*}
w_{m/2+j} = 1-q+w_{j}, \quad w_{m/2+1-j}+w_{j} = q,\quad j=1,\dots, m/2.
\end{equation*}
Thus,
\[
\hat a_{ij} = -\log|w_{i}-w_{j}-1+q|, \quad 1\le i,j\le m/2,
\]
and hence the entries $\tilde a_{ij}$ of the matrix $A_{12}J_{m/2}$ are given
by
\[
\tilde a_{ij} = \hat a_{i,m/2+1-j}
= -\log|w_{i}-w_{m/2+1-j}-1+q|
= -\log|w_{i}+w_{j}-1|, \quad 1\le i,j\le m/2.
\]
Finally, by~\eqref{eq:B}, the entries $b_{ij}$ of $B$ are given for $i=j$ by
\[
b_{ii}=a_{ii} + \tilde a_{ii} = -\log r-\log|2w_{i}-1|
= -\log\left|2r(w_{i}-1/2)\right|,
\]
and for $i\ne j$ by
\[
b_{ij}=a_{ij}+\tilde a_{ij}=-\log|w_{i}-w_{j}|-\log|w_{i}+w_{j}-1|,
=-\log|(w_{i}-1/2)^2-(w_{j}-1/2)^2|,
\]
which completes the proof.
\end{proof}

From this lemma we have
\begin{equation}\label{eq:Bmat}
B =-
{\small \begin{bmatrix}
\log(2r\sqrt{z_{1}}) & \log|z_{1}-z_{2}|     &\cdots &\log|z_{1}-z_{m/2-1}| 
&\log|z_{1}-z_{m/2}| \\
\log|z_{2}-z_{1}| &\log(2r\sqrt{z_{2}}) &\cdots &\log|z_{2}-z_{m/2-1}| & 
\log|z_{2}-z_{m/2}| \\
\vdots & \vdots & \ddots & \vdots & \vdots \\
\log|z_{m/2-1}-z_{1}| &\log|z_{m/2-1}-z_{2}| & \cdots &\log(2r\sqrt{z_{m/2-1}}) 
 &\log|z_{m/2-1}-z_{m/2}| \\
\log|z_{m/2}-z_{1}| & \log|z_{m/2}-z_{2}|  &\cdots &\log|z_{m/2}-z_{m/2-1}| 
&\log(2r\sqrt{z_{m/2}}) \\
\end{bmatrix},}
\end{equation}
and we can easily see that $B$ is symmetric (as already mentioned above), but
not centrosymmetric.

\begin{theorem} \label{thm:B}
For $k \geq 2$, the entries $b_{ij}$ of $B$ satisfy
\begin{equation} \label{eqn:bound_bij}
\log \frac{1}{q (1-q)} < \log \frac{1}{(1-q) (q - 2r)} \leq b_{ij}
\leq \log \frac{1}{r (1 - 2q + 2r)} < \log \frac{1}{r (1-2q)},
\end{equation}
and they decay away from the diagonal in each row and column, i.e.,
\begin{align}
b_{i,1} < \ldots < b_{i, i-1} < b_{i,i}, \quad
b_{i,i} > b_{i, i+1} > \ldots > b_{i,m/2}, \label{eqn:decay_row_B} \\
b_{1,j} < \ldots < b_{j-1, j} < b_{j,j}, \quad
b_{j,j} > b_{j+1, j} > \ldots > b_{m/2,j}, \label{eqn:decay_col_B}
\end{align}
for $1 \leq i, j \leq m/2$.
\end{theorem}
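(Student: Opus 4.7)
The plan is to exploit the formulas $b_{ii} = -\log(2r\sqrt{z_i})$ and $b_{ij} = -\log|z_i - z_j|$ from Lemma~\ref{lem:B_entries}, where $z_i = (w_i - 1/2)^2$, together with the structural fact that the midpoints satisfy $r = w_1 < w_2 < \cdots < w_{m/2} = q - r < 1/2$. This makes $(z_i)_{i=1}^{m/2}$ a strictly positive and strictly decreasing sequence, and from this monotonicity the off-diagonal decay in \eqref{eqn:decay_row_B} is immediate: for fixed $i$ and $j > i$, $|z_i - z_j| = z_i - z_j$ grows as $j$ grows, so $b_{ij}$ decreases; the case $j < i$ is analogous. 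The column decay \eqref{eqn:decay_col_B} then follows from the symmetry of $B$.

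The technical heart is showing $b_{i,i} > b_{i, i\pm 1}$, the only place where a diagonal entry (which does not fit the $z_i - z_j$ pattern) must be compared against an off-diagonal entry. For $b_{i,i} > b_{i, i+1}$, I would use the factorization $|z_i - z_{i+1}| = (w_{i+1} - w_i)(1 - w_i - w_{i+1})$ and set $d = w_{i+1} - w_i$, $s = 1 - 2w_i$, which converts the desired inequality into $(d-r)s > d^2$. Two independent estimates supply this. First, the smallest gap between adjacent intervals of $E_k \cap [0,q] = q E_{k-1}$ equals $q^{k-1}(1-2q)$, so the separation of consecutive midpoints satisfies $d \geq q^k + q^{k-1}(1-2q) = q^{k-1}(1-q)$; since $q < 1/2$, this strictly exceeds $q^k = 2r$. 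Second, $w_{i+1} \leq q - r < 1/2$ gives $s > 2d$. Combining, $(d-r)s > 2d(d-r) > d^2$. The companion inequality $b_{i,i} > b_{i, i-1}$ is easier because $1 - w_{i-1} - w_i = s + (w_i - w_{i-1}) > s$, and the analogous manipulation reduces to a trivially positive quantity.

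Once $b_{ii}$ is known to dominate its row, the global maximum of the entries of $B$ equals $\max_i b_{ii}$. Since $b_{ii} = -\log(r(1 - 2w_i))$ is increasing in $w_i$, this maximum is $b_{m/2, m/2} = \log \frac{1}{r(1-2q+2r)}$. The decay pattern places the global minimum at the corner entry $b_{1, m/2}$, and a direct computation gives $z_1 - z_{m/2} = (1/2 - r)^2 - (1/2 - q + r)^2 = (q-2r)(1-q)$, whence $b_{1, m/2} = \log \frac{1}{(1-q)(q-2r)}$. The outer strict comparisons in \eqref{eqn:bound_bij} reduce to $q(1-q) > (1-q)(q-2r)$ and $r(1-2q+2r) > r(1-2q)$, both of which follow from $r > 0$. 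The real obstacle throughout is the gap estimate $d > 2r$: it rests on the combinatorics of the generalized Cantor construction and is sharp at the excluded endpoint $q = 1/2$, where $d$ can equal $2r$ and the diagonal-versus-neighbor comparison would degenerate.
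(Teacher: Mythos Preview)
Your proof is correct and follows essentially the same route as the paper: both use Lemma~\ref{lem:B_entries}, the monotonicity of $z_i = (w_i - 1/2)^2$, the factorization $|z_i - z_j| = |w_i - w_j|(1 - w_i - w_j)$, and the identification of the extremal entries at $b_{m/2,m/2}$ and $b_{1,m/2}$. The only notable difference is that you frame the estimate $d = w_{i+1} - w_i > 2r$ as the ``real obstacle'' and establish it via the sharp Cantor gap bound $d \geq q^{k-1}(1-q)$; the paper simply observes that since the intervals $I_{k,i}$ and $I_{k,i+1}$ are disjoint and each has length $2r$, their midpoints are automatically separated by more than $2r$, so no combinatorics of the construction is actually needed here.
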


\begin{proof}
In~\eqref{eqn:bound_bij}, we need $q - 2r > 0$, which is equivalent to $r <
q/2$ and thus to $k \geq 2$.

\emph{Decay.} Since
\begin{equation} \label{eqn:wk_first_half}
0 < r=w_1 < w_2 < \ldots < w_{m/2}=q-r < q < \frac{1}{2},
\end{equation}
we have
\begin{equation}\label{eq:zk-dec}
1/4 > z_1 > z_2 > \ldots > z_{m/2} > 0.
\end{equation}
This implies
\begin{equation*}
\abs{z_i - z_1} > \abs{z_i - z_2} > \ldots > \abs{z_i - z_{i-1}},
\quad
\abs{z_i - z_{i+1}} < \ldots < \abs{z_i - z_{m/2}},
\end{equation*}
and equivalently
\begin{equation*}
b_{i,1} < \ldots < b_{i, i-1}, \quad b_{i, i+1} > \ldots > b_{i,m/2}.
\end{equation*}
It remains to show $b_{i, i \pm 1} < b_{ii}$, which is equivalent to $\abs{z_i 
- z_{i \pm 1}} > 2 r
\sqrt{z_i}$.
Note that
\begin{equation*}
\abs{z_i - z_j} = \abs{w_i - w_j} \abs{w_i + w_j - 1} = \abs{w_i - w_j} (1 -
(w_i + w_j)).
\end{equation*}
Since $\abs{w_i - w_{i-1}} > r$ and $w_i + w_{i-1} < 2 w_i < 1$, we have
$\abs{z_i - z_{i-1}} > r (1 - 2 w_i) = 2 r \sqrt{z_i}$,
i.e., $b_{i, i-1} < b_{ii}$.
Since $\abs{w_i - w_{i+1}} > 2r$, we have
\begin{equation*}
\abs{z_i - z_{i+1}}
> 2r (1 - (w_i + w_{i+1})) > r (1 - 2 w_i).
\end{equation*}
The last estimate is equivalent to $1 > 2 w_{i+1}$, which holds
by~\eqref{eqn:wk_first_half}.  This, together with
$2 r \sqrt{z_i} = r \abs{2 w_i - 1} = r (1 - 2 w_i)$,
establishes $b_{ii} > b_{i, i+1}$.
We thus have shown~\eqref{eqn:decay_row_B}.
Since $B$ is symmetric, \eqref{eqn:decay_row_B} is equivalent
to~\eqref{eqn:decay_col_B}.

\medskip

\emph{Bounds for $b_{ij}$.}
Let $1 \leq i, j \leq m/2$.
To establish the upper bound, it is enough to show $b_{ii} \leq \log
\frac{1}{r(1 - 2q + 2r)}$, which is equivalent to $1 - 2q + 2r \leq 2
\sqrt{z_i} = 1 - 2
w_i$ and to $w_i \leq q-r$, which holds by~\eqref{eqn:wk_first_half}.
Next, we show the lower bound for $b_{ij}$.  It follows from~\eqref{eq:zk-dec}
that
\[
|z_i-z_j|\le|z_1-z_{m/2}|.
\]
By~\eqref{eqn:wk_first_half}, we have $z_1=(0.5-w_1)^2=(0.5-r)^2$ and $z_{m/2}=(0.5-w_{m/2})^2=(0.5-q+r)^2$. Thus
\[
|z_i-z_j|\le\left|(0.5-r)^2-(0.5-q+r)^2\right|=|(q-2r)(1-q)|
\]
and hence, for $i\ne j$,
\[
b_{ij}=-\log|z_i-z_j|\ge-\log|(q-2r)(1-q)|.
\]
The latter inequality holds also for $b_{ii}$ since it is the largest element
in its row.
The rest is clear.
\end{proof}

Let $\hat{\mathbf{y}}=B\mathbf{y}$ where $\mathbf{y}=[y_{1},\dots,y_{m/2}]^T$
and $\hat{\mathbf{y}}=[\hat y_{1},\dots,\hat y_{m/2}]^T$.  In general,
computing the vector $\hat{\mathbf{y}}$ requires $O(m^2)$ operations.  However,
for real $\mathbf{y}$,
the form of $B$ in~\eqref{eq:Bmat} yields
\begin{equation}\label{eq:hyj}
\hat y_j
= -\log(2r\sqrt{z_j}) y_j - \Re \bigg( \sum_{{\begin{subarray}{l}\ell=1 \\
\ell\ne
j\end{subarray}}}^{m/2}y_\ell\log(z_{j}-z_{\ell}) \bigg), \quad
j=1,2,\ldots,m/2,
\end{equation}
and hence computing the vector $\hat{\mathbf{y}}$ requires one application of
the FMM, which uses only $O(m)$ operations; see~\cite{GR}.  In MATLAB, the sum
in~\eqref{eq:hyj} can be computed fast and efficiently using the MATLAB function
\verb|cfmm2dpart| from the toolbox \verb|FMMLIB2D|~\cite{Gre-Gim12}.  Using the
input parameters ${\bf y}$, ${\bf z}=[z_{1},\dots,z_{m/2}]^T$ and $r$, the
vector $\hat{\mathbf{y}}=B\mathbf{y}$ can be computed by calling the following
MATLAB function:

\begin{verbatim}
function yt = By_eval(y, z, r, iprec)
a(1, :) = real(z); a(2, :) = imag(z);
[U] =  cfmm2dpart(iprec,length(z),a,1,y(:).',0,[],1);
yt  = -(log(2*r*sqrt(abs(z.'))).*y + real(U.pot).');
end
\end{verbatim}

\noindent Here, \verb|iprec| is the precision flag for the FMM.  In our
computations, we use \verb|iprec=4|, which means that the tolerance for the FMM
is $0.5\times 10^{-12}$.

\subsection{Iteratively solving the linear algebraic systems}\label{sec:GMRES}

As mentioned above, the matrix $B$ is symmetric and nonsingular.  Thus, we can
apply the MINRES method~\cite{PaiSau75} (which is well defined for symmetric
nonsingular matrices) and the GMRES method~\cite{SaaSch86} (which is well
defined for all nonsingular matrices) in order to solve the system
\eqref{eqn:B1-system} iteratively\footnote{Numerical experiments suggest that
the matrix $A$ and hence also $B$ are positive definite. We did not prove
this, but we performed numerical experiments also with the CG
method~\cite{HesSti52} applied to the system~\eqref{eqn:B1-system}, and in these
experiments the CG residual norms behaved similarly to those of MINRES.}. The
two methods are mathematically equivalent and minimize the Euclidean residual
norm over a sequence of Krylov subspaces in every step. More precisely, when
started with the initial vector $\mathbf{y}_0=0$, they generate iterates
\begin{equation*}
\mathbf{y}_j\in {\cal K}_j(B,\mathbf{e})={\rm span}\{\mathbf{e},B
\mathbf{e},\dots, B^{j-1}\mathbf{e}\},
\end{equation*}
such that
\begin{equation*}
\|\mathbf{e}-B\mathbf{y}_j\|_2=\min_{\mathbf{z}\in {\cal
K}_j(B,\mathbf{e})}
\|\mathbf{e}-B\mathbf{z}\|_2.
\end{equation*}
Each method is based on generating an orthonormal basis of the Krylov subspaces
${\cal K}_j(B,\mathbf{e})$ for $j=1,2,\dots$, and this process requires one
matrix-vector product with $B$ in every step, which can be computed using the
FMM as described above.

The essential difference between the two methods is that MINRES uses the
symmetry of the system matrix in order to generate the orthonormal Krylov
subspace bases with short (3-term) recurrences, while GMRES is based on full
recurrences.  Thus, the computational cost of a MINRES step (in terms of memory
requirements and floating point operations) is significantly lower than of a
GMRES step.  However, methods based on short recurrences explicitly perform the
orthogonalization only with respect to a few recent vectors.  In finite
precision computations this can lead to a much faster overall loss of
orthogonality, and even a loss of rank in the computed Krylov subspace
``basis''.  Such loss of rank corresponds to a delay of convergence; see,
e.g.,~\cite{MeuStr06} or~\cite[Section~5.9]{LieStr13} for comprehensive
analyses of this phenomenon and further references.

In the application in this paper the observed delay of convergence is so
severe that MINRES is not competitive with GMRES, although it is based on
short recurrences.

\begin{example}\label{ex:GMvsMR}
We apply the MATLAB implementations of MINRES and GMRES with the initial vector
$\mathbf{y}_0=0$ to linear algebraic systems of the form \eqref{eqn:B1-system}
for $q=1/3$ with $m=2^{14}$ and $m=2^{16}$.  The matrix-vector products with
$B$ are performed using the function \verb|By_eval| described above, and hence
our function calls to the iterative methods are

\begin{verbatim}
minres(@(y) By_eval(y, z, r, 4),ones(m/2,1),tol,MAXIT);
gmres(@(y) By_eval(y, z, r, 4),ones(m/2,1),[],tol,MAXIT);
\end{verbatim}

\noindent Our tolerance for the relative residual norm is \verb|tol=1e-12| and
we use \verb|MAXIT=400| as the maximal number of iterations, but this number is
not reached in our experiment.  In Figure~\ref{fig:MR-GM} we plot the relative
residual norms of the two methods. In exact arithmetic these norms are
identical.  In our finite precision computation we observe that MINRES suffers
from a significant delay of convergence.  This delay not only leads to many
more iterative steps until the tolerance is reached, but also to a much longer 
computation time in comparison with GMRES:

\begin{center}
\begin{tabular}{c|c|c}
& GMRES time (\textup{s}) & MINRES time (\textup{s})\\ \hline
$m=2^{14}$ & 1.91 & 3.14 \\
$m=2^{16}$ & 8.00 & 15.46
\end{tabular}
\end{center}
\end{example}

\begin{figure}
{\centering
\includegraphics[width=0.475\linewidth]{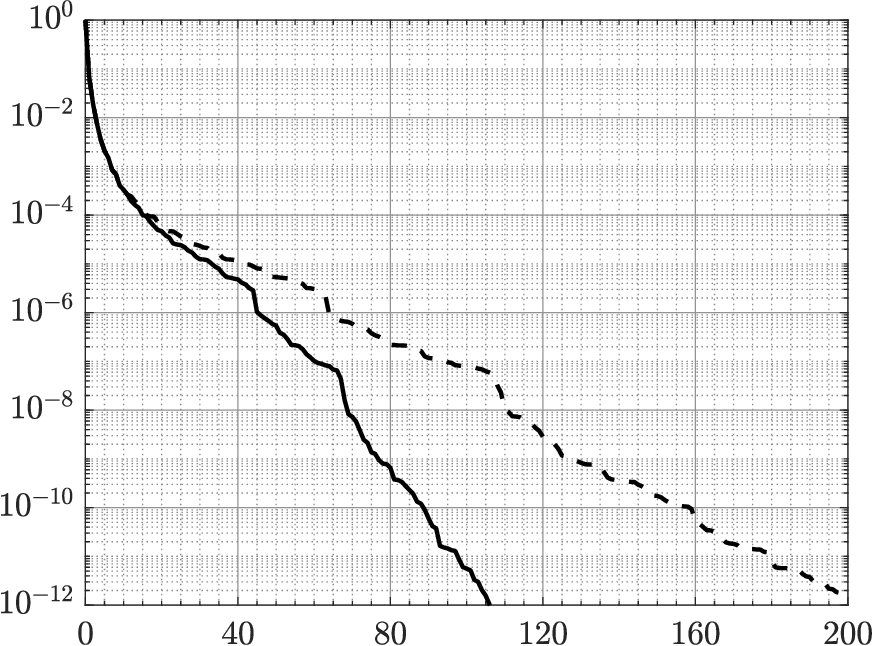}
\hfill
\includegraphics[width=0.475\linewidth]{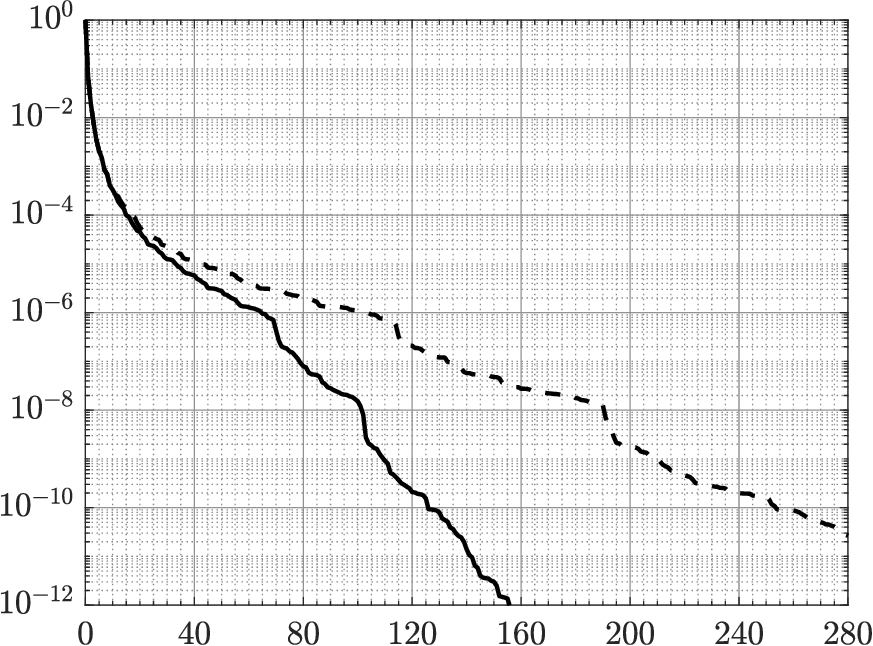}

}
\caption{Relative residual norms of GMRES (solid) and MINRES (dashed) for the
linear algebraic systems \eqref{eqn:B1-system} for $q=1/3$ with $m=2^{14}$
(left) and $m=2^{16}$ (right) in Example~\ref{ex:GMvsMR}.} \label{fig:MR-GM}
\end{figure}

\medskip
As a consequence of these numerical observations we have decided to use GMRES
in all our experiments.

We will now describe a problem-adapted technique to precondition the system
\eqref{eqn:B1-system}, which will lead to an even faster convergence of GMRES.
Consider the matrix $A$ in \eqref{eq:A} for a fixed size $m=2^k$.  Because of
the symmetric distribution of the $w_j$, $j=1,\dots,m$, for any fixed
$j=1,\dots,k-1$ we can write this matrix in the block form
\begin{equation}\label{eq:A-D}
A =-
\begin{bmatrix}
D           &D_{12}     &\cdots &D_{1p}  \\
D_{21}      &D          &\ddots & \vdots \\
\vdots      &\ddots     &\ddots & D_{p-1,p} \\
D_{p1}      & \cdots & D_{p,p-1} & D \\
\end{bmatrix}, \quad\mbox{where $D\in \R^{2^j\times 2^j}$, and $p=2^{k-j}$.}
\end{equation}
By construction, the entries of $A$ decay row- and column-wise with their
distance from the diagonal; see Theorem~\ref{thm:A}.  Hence the ``block diagonal
part'' $P_m:= \diag(D,\dots,D)$ contains the largest entries of $A$.  This
block diagonal matrix is easy to invert when $j$ is not too large, since it
requires just one inversion of $D$.  Hence it can be used as a preconditioner
for the linear algebraic system~\eqref{eqn:Ax-e}.

From~\eqref{eq:B} we have $B=A_{11}+A_{12}J_{m/2}$, where $A_{11}$ is the
leading $(\frac{m}{2}\times\frac{m}{2})$-block of $A$, and $A_{12}$ is the
upper off-diagonal block, which overall contains smaller entries than $A_{11}$.
The ``block diagonal part'' of $A_{11}$ contains $p/2$ copies of $D$.  We use
this matrix
\begin{equation*}
P_{m/2} := \diag(D,\dots,D) \in\R^{\frac{m}{2}\times\frac{m}{2}}
\end{equation*}
as our preconditioner for the system \eqref{eqn:B1-system} with~$B$, i.e.,
instead of \eqref{eqn:B1-system} we apply GMRES to the system
\begin{equation}\label{eq:PBsys}
P_{m/2}^{-1}B{\mathbf y}=P_{m/2}^{-1}{\mathbf e}.
\end{equation}
The effectiveness of this approach is illustrated next.

\begin{example} \label{ex:GM_preconditioner}
We consider the same linear algebraic systems as in Example~\ref{ex:GMvsMR}.
Figure~\ref{fig:GMRES-precon} shows the relative residual norms of GMRES
applied to the system $B\mathbf{y}=\mathbf{e}$ (solid) and the preconditioned
system $P_{m/2}^{-1}B\mathbf{y}=P_{m/2}^{-1}\mathbf{e}$ (dotted), where we have
used $j=4$ for the preconditioner $P_{m/2}$, and have inverted the matrix
$D\in\R^{16\times 16}$ explicitly using MATLAB's \verb|inv| function.
The solid curves in Figure~\ref{fig:GMRES-precon} are the same as in
Figure~\ref{fig:MR-GM}.  We observe that the preconditioning reduces the number
of GMRES iterations to reach the relative residual norm tolerance of $10^{-12}$
by more than $50\%$, and that the required times are reduced accordingly:

\begin{center}
\begin{tabular}{c|c|c}
& GMRES time (\textup{s}) & Preconditioned GMRES time (\textup{s})\\ \hline
  $m=2^{14}$ & 1.91 & 0.98 \\
  $m=2^{16}$ & 8.00 & 3.83
\end{tabular}
\end{center}
\medskip

\noindent The effect of the preconditioner on the matrix condition number can
be seen in Figure~\ref{fig:AB-cond}, where we show the condition numbers of
$P_m^{-1}A$ and $P_{m/2}^{-1}B$ as functions of $m$ for several values of $j$.
The results presented in Figure~\ref{fig:AB-cond-2} suggest that
\[
\frac{\kappa(P_m^{-1}A)}{\kappa(A)} \approx \frac{1}{2^{j+1/2}}
\quad\mbox{and}\quad
\frac{\kappa(P_{m/2}^{-1}B)}{\kappa(B)} \approx \frac{1}{2^{j+1/2}}.
\]
Clearly, the condition numbers of the preconditioned matrices decrease with
increasing $j$, while a larger~$j$ leads to higher computational costs for
applying the preconditioner.  In our experiments we found that $j=4$
represents a good trade-off value.
For this value we have $\frac{\kappa(P_m^{-1}A)}{\kappa(A)} \approx 0.045$ and
$\frac{\kappa(P_{m/2}^{-1}B)}{\kappa(B)} \approx 0.045$.
\end{example}

\begin{figure}
{\centering
\includegraphics[width=0.475\linewidth]{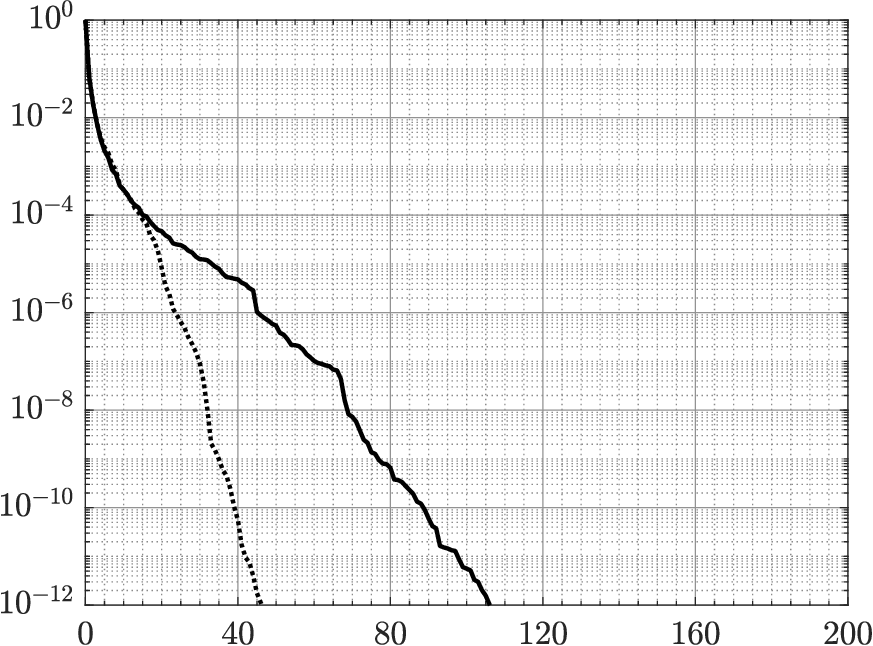}\hfill
\includegraphics[width=0.475\linewidth]{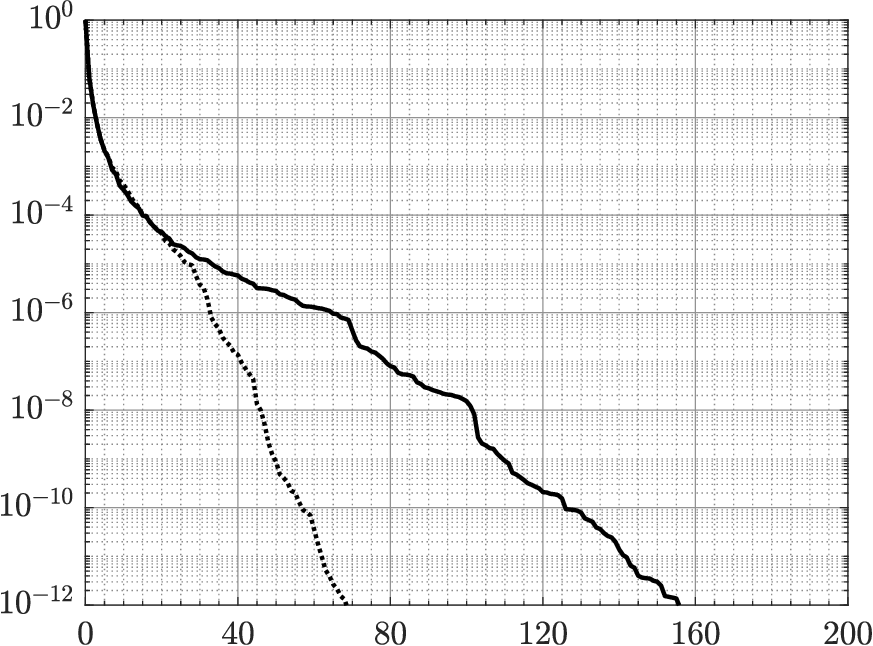}
}
\caption{Relative residual norms of GMRES (solid) and preconditioned GMRES
(dotted) with $m=2^{14}$ (left) and $m=2^{16}$ (right) in
Example~\ref{ex:GM_preconditioner}.} \label{fig:GMRES-precon}
\end{figure}

\begin{figure}
{\centering
\includegraphics[width=0.475\linewidth]{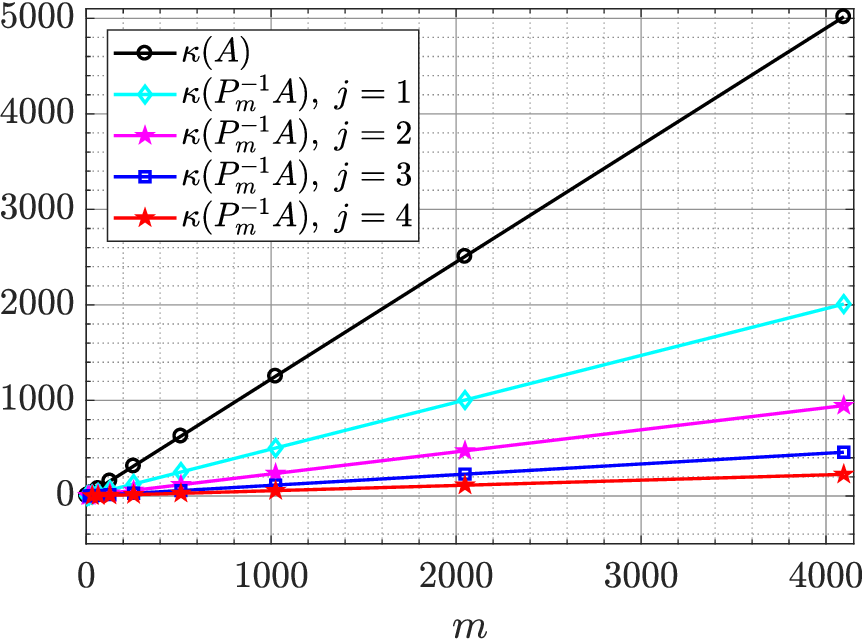}
\hfill
\includegraphics[width=0.475\linewidth]{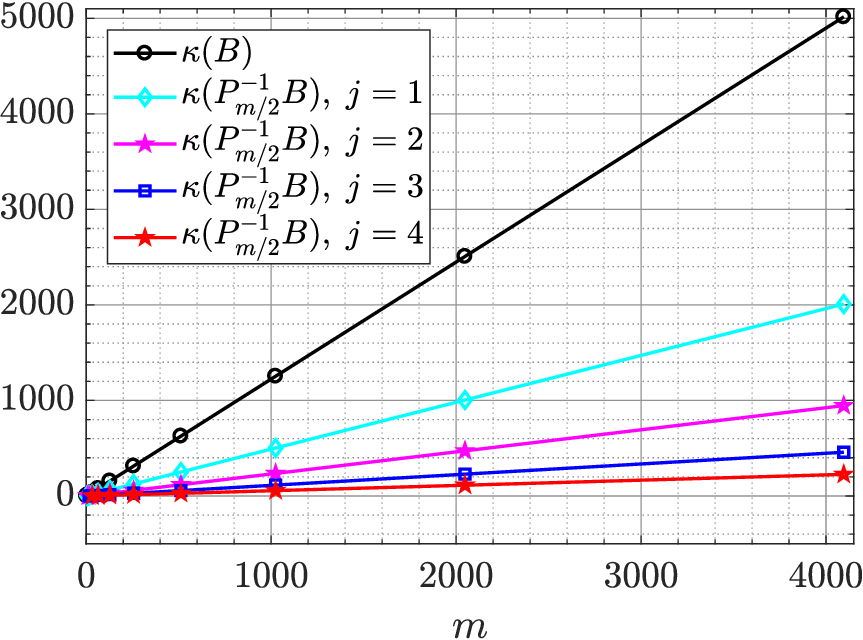}
}
\caption{2-norm condition numbers of the matrices $P_m^{-1}A \in \R^{m,m}$ and
$P_{m/2}^{-1}B \in \R^{m/2, m/2}$ as functions of $m = 2^k$ for $q=1/3$,
$j=1,2,3,4$, and $k=j+1,j+2, \ldots,12$.}
\label{fig:AB-cond}
\end{figure}

\begin{figure}
{\centering
\includegraphics[width=0.475\linewidth]{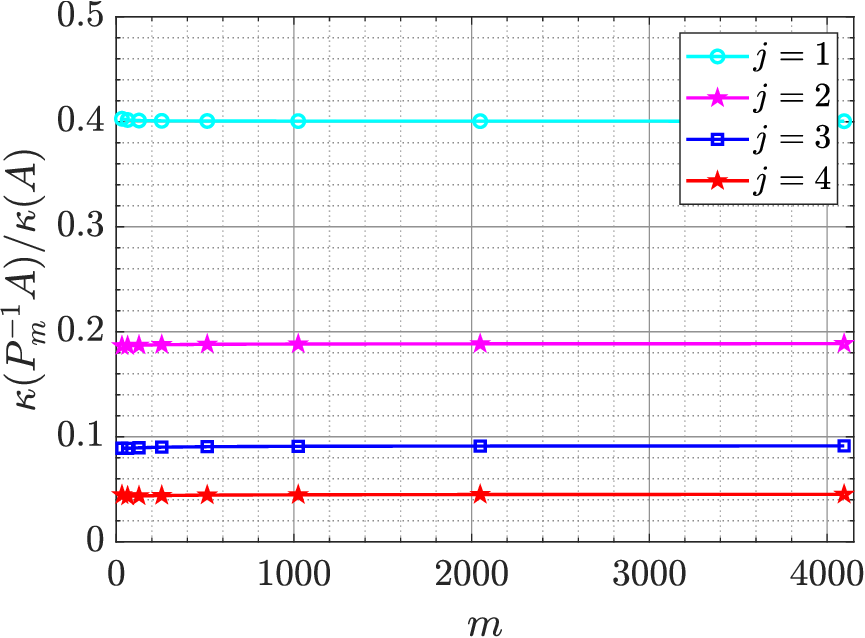}
\hfill
\includegraphics[width=0.475\linewidth]{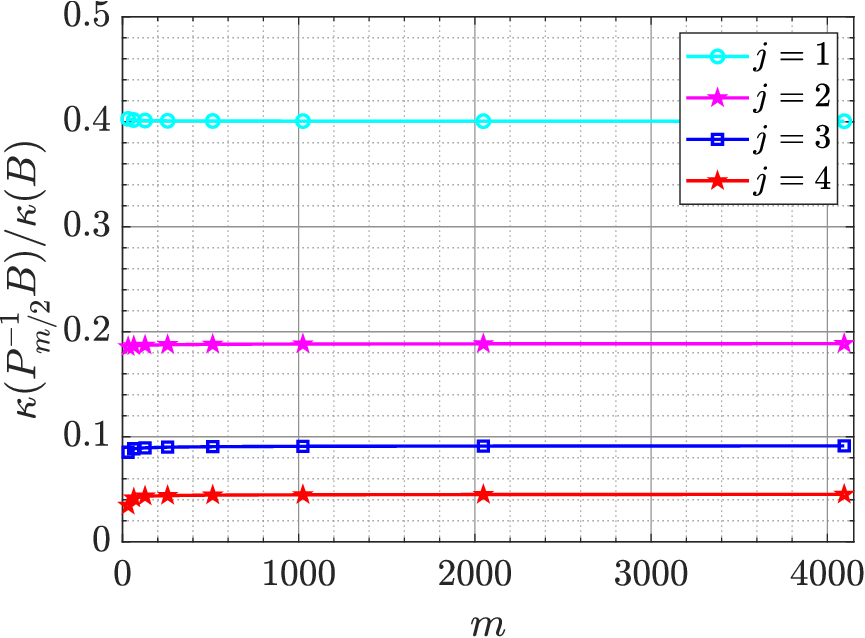}
}
\caption{$\kappa(P_m^{-1}A)/\kappa(A)$ and $\kappa(P_{m/2}^{-1}B)/\kappa(B)$ as
functions of $m = 2^k$ for $q=1/3$, $j=1,2,3,4$, and $k=5,6, \ldots,12$.}
\label{fig:AB-cond-2}
\end{figure}

Using the preconditioner $P_{m/2}$, the above MATLAB function \verb|By_eval| is
modified as follows:

\begin{verbatim}
function yt = By_peval(y, z, r, iprec, invD)
a(1, :) = real(z); a(2, :) = imag(z);
[U] =  cfmm2dpart(iprec,length(z),a,1,y(:).',0,[],1);
yt  = -(log(2*r*sqrt(abs(z.'))).*y + real(U.pot).');
[dD, dD] = size(invD);
for k=1:length(z)/dD
     yt(dD*(k-1)+1:dD*k) = invD*yt(dD*(k-1)+1:dD*k);
end
end
\end{verbatim}

\subsection{Computing the logarithmic capacity of generalized Cantor sets}
\label{sec:NumExGenC}

We now present the results of numerical computations with the method described
above for computing the logarithmic capacity of generalized Cantor sets.  Our
method requires the parameters $q$ (for the definition of the generalized
Cantor set) and $k$ (for the level of the approximation) as its only inputs;
see~\eqref{eq:Ek}.  These parameters completely determine the preconditioned
linear algebraic system~\eqref{eq:PBsys} that we solve with GMRES, where the
matrix-vector products are computed using the FMM as described above.  As in
the examples above, the tolerance for the relative GMRES residual norm is
$10^{-12}$.  GMRES then returns a computed approximation $\widetilde{\mathbf y}
\approx {\mathbf y}$, and we report the value $\exp(-1/(2{\mathbf e}^T
\widetilde{\mathbf y}))$ as $\capp(D_k)$ in our tables below.

\begin{example}[Classical Cantor middle third set]\label{ex:third}
We consider the classical Cantor middle third set, i.e., the set~\eqref{eq:E}
with $q=1/3$ in~\eqref{eq:Ek}.
Table~\ref{tab:ccs-n} gives the approximate values of $\capp(D_k)$ for
$k=5,6\dots,20$ computed by the method presented in this paper, as well as the
computation time (in seconds), and the number of GMRES iteration steps.

\begin{table}[t]
\caption{Computed approximations of $\capp(D_k)$ for $q = 1/3$,
timings, and number of GMRES iteration steps using the new proposed method for 
Example~\ref{ex:third}.} \label{tab:ccs-n}
\begin{center}
\begin{tabular}{r|r|c|r|r}
\hline
$k$ & $m=2^k$  & $\capp(D_k)$ & time (s) & iter\\
\hline
 5  & 32      & 0.227457816902705 & 0.03 & 4 \\
 6  & 64      & 0.224254487425132 & 0.03 & 7 \\
 7  & 128     & 0.222633059381908 & 0.06 & 10 \\
 8  & 256     & 0.221808427761487 & 0.11 & 15 \\
 9  & 512     & 0.221387991441743 & 0.16 & 19 \\
 10 & 1024    & 0.221173357505459 & 0.22 & 22\\
 11 & 2048    & 0.221063713734092 & 0.27 & 25 \\
 12 & 4096    & 0.221007684178946 & 0.40 & 32 \\
 13 & 8192    & 0.220979047273590 & 0.68 & 39 \\
 14 & 16384   & 0.220964409542387 & 1.00 & 47 \\
 15 & 32768   & 0.220956927135913 & 1.70 & 57 \\
 16 & 65536   & 0.220953102245645 & 3.37 & 69 \\
 17 & 131072  & 0.220951146997627 & 7.95 & 81 \\
 18 & 262144  & 0.220950147487058 & 16.79 & 97 \\
 19 & 524288  & 0.220949636541913 & 37.79 & 119 \\
 20 & 1048576 & 0.220949375348718 & 96.98 & 143 \\
\hline
\end{tabular}
\end{center}
\end{table}

\begin{table}[t]
\caption{Computed approximations of $\capp(E_k)$ and $\capp(D_k)$ for $q = 
1/3$, and timings using the BIE Method~\cite{LSN} for Example~\ref{ex:third}.} 
\label{tab:ccs-o}
\begin{center}
\begin{tabular}{r|r|c|r|c|r}
\hline
$k$ & $m=2^k$  & $\capp(E_k)$ & time (s) & $\capp(D_k)$ & time (s) \\
\hline
 5  & 32      & 0.221938129124324 & 9.95     & 0.227918836283900 & 4.69 \\
 6  & 64      & 0.221454205006181 & 19.24    & 0.224486551122397 & 12.67 \\
 7  & 128     & 0.221207178734289 & 47.21    & 0.222750783871879 & 40.66 \\
 8  & 256     & 0.221080995391656 & 131.18   & 0.221868377621828 & 111.86 \\
 9  & 512     & 0.221016516406109 & 402.66   & 0.221418578408387 & 357.10 \\
 10 & 1024    & 0.220983561713855 & 1405.82  & 0.221188978166610 & 1277.74\\
 11 & 2048    & 0.220966717159289 & 5071.78  & 0.221071694998756 & 4351.78 \\
 12 & 4096    & 0.220958106742622 & 18207.76 & 0.221011763144866 & 15509.24 \\
\hline
\end{tabular}
\end{center}
\end{table}

We compare these results with the computed values and the corresponding timings
for the BIE method from~\cite{LSN}. Approximating $\capp(E_k)$ with the BIE
method requires a preliminary conformal map to ``open up'' the intervals of 
$E_k$ to obtain a compact set of the same capacity, but bounded by smooth 
Jordan curves.  Then the method is used to compute the capacity of this new 
set. For approximating $\capp(D_k)$ with the BIE method, no preliminary 
conformal map is needed since $\partial D_k$ is smooth.  For both cases, we 
take $n=2^6$ discretization points on each of the $m=2^k$ boundary curves, and 
the obtained results are presented in Table~\ref{tab:ccs-o}.
(The value $n = 2^6$ was chosen, since with this value the BIE method yields
the logarithmic capacity of a single disk and of two disks with equal radius
with a relative error of order $10^{-16}$.)
Further, by Tables~\ref{tab:ccs-n} and~\ref{tab:ccs-o}, there is a good
agreement between the approximations of $\capp(D_k)$ obtained by the BIE method
from~\cite{LSN} and the method presented in this paper. This agreement
improves as $k$ increases.
As described in Remark~\ref{rmk:savings}, the new method is
significantly more efficient, since it uses only a single charge point for each
component of $D_k$. In addition, we have used the special (centrosymmetric)
structure of the system matrices as well as a preconditioner for GMRES to
speed up the computations.

The estimate of $\capp(E)$ in~\cite{LSN} was obtained by extrapolation from the
computed values for $\capp(E_k)$.  Using the same approach here, we start by
noting that the differences
\begin{equation*}
d_k= \capp(D_k)-\capp(D_{k+1}),\quad k=5,6,\dots,19,
\end{equation*}
decrease linearly on a logarithmic scale. We store these $15$ values in the
vector $d$, and use the MATLAB command {\tt p=polyfit(5:19,log(d),1))} to
compute a linear polynomial $p(x)=p_1x+p_2$ of best approximation in the least 
squares sense for the values $\log(d_k)$.  The computed coefficients are
\begin{equation*}
p_1=-0.671894676421546,\quad p_2=-2.39546038319728.
\end{equation*}
Starting with our computed approximation of $\capp(D_{20})$ we can then
approximate $\capp(D_k)$ for $k\geq 21$ by extrapolation, i.e.,
\begin{equation*}
\capp(D_k)\approx \capp(D_{20})-\sum_{j=20}^{k-1}\exp(p(j)),\quad k\geq 21.
\end{equation*}
Since $\exp(p(52))<10^{-16}$, we use this formula with $k=52$ for our final 
estimate of $\capp(E)$ which is shown in Table~\ref{tab:middle-third}.
Applying the same extrapolation approach to the values obtained 
by the BIE method (see Table~\ref{tab:ccs-o}) yields the estimates of 
$\capp(E)$ presented in Table~\ref{tab:middle-third}.

\begin{table}[t]
\caption{Estimates of the logarithmic capacity of the classical
Cantor middle third set.  Matching digits are underlined.} 
\label{tab:middle-third}
\begin{center}
\begin{tabular}{l|l}
\hline
New Method & \underline{0.220949}103628452 \\ \hline
BIE Method (based on $\capp(E_k)$) & \underline{0.220949}114469744 \\ \hline
BIE Method (based on $\capp(D_k)$) & \underline{0.220949}728829335 \\ \hline
Ransford~\cite{Ransford2010} & \underline{0.220949}102189525665\\ \hline
Kr\"{u}ger and Simon~\cite{KruSim15} & \underline{0.220949}98647421 \\
\hline
\end{tabular}
\end{center}
\end{table}

Our estimate (obtained with the new proposed method) agrees in its first eight 
significant digits with the one that was ``strongly suggested''
by Ransford~\cite[p.~568]{Ransford2010}. All these estimates agree in their
first six significant digits, and they are all contained in the interval
\begin{equation*}
[0.22094810685,0.22095089228],
\end{equation*}
which according to Ransford and Rostand~\cite{RanRos07} contains $\capp(E)$.
\end{example}

\begin{example}[Generalized Cantor set]\label{ex:generalized}
We consider now the numerical approximation of $\capp(E)$ for general 
$q\in(0,0.5)$.
The limiting cases are $E=\{0,1\}$ with $\capp(E)=0$ (for $q=0$)
and $E=[0,1]$ with $\capp(E)=0.25$ (for $q=0.5$).
For several values of $q$, we approximate the values of $\capp(D_k)$ with the
CSM
and extrapolate these values to obtain an approximation of $\capp(E)$ using the
same approach as described in Example~\ref{ex:third}.  The results are given in
Table~\ref{tab:q}, where the values of $q$ are chosen from the values 
considered in~\cite[Example~4.14]{LSN} (note that $q$ here is denoted by $r$ 
in~\cite{LSN}).
We also state the values of $\capp(E)$ computed in~\cite{LSN} using a 
discretization with $k=12$ (corresponding to $m=4096$), and then extrapolation.
We observe that the estimates obtained by the two methods agree at least in
their first four significant digits (matching digits are underlined).

\begin{table}[t]
\caption{The computed approximations of $\capp(E)$ for the
generalized Cantor set; see Example~\ref{ex:generalized}.}\label{tab:q}
\begin{center}
\begin{tabular}{l|l|l}
\hline
$q$      &  BIE Method~\cite{LSN} & New Method  \\
\hline
 $4/24$  & \underline{0.1384}4418298159   & \underline{0.1384}37531550946 \\
 $6/24$  & \underline{0.1865}11016338442  & \underline{0.1865}08655120292  \\
 $8/24$  & \underline{0.2209491}94629475  & \underline{0.2209491}03628452  \\
 $9/24$  & \underline{0.233218}551525021  & \underline{0.233218}660959678  \\
 $10/24$ & \underline{0.242233}234580321  & \underline{0.242233}644605597  \\
 $11/24$ & \underline{0.2479}29630663845  & \underline{0.2479}30030139435   \\
\hline
\end{tabular}
\end{center}
\end{table}

It was suggested in~\cite{RanRos07} that the values of $\capp(E)$ can be
approximated by
\begin{equation} \label{eqn:approximation_of_capEq}
\capp(E)\approx f(q)=q(1-q)-\frac{q^3}{2}\left(\frac{1}{2}-q\right)^{3/2}.
\end{equation}
Figure~\ref{fig:cap-f} shows the graph of the function $f(q)$ as well as
our computed approximations of $\capp(E)$.  The maximum distance between
the values of
$f(q)$ and the computed approximations of $\capp(E)$ from the third column of 
Table~\ref{tab:q} is $7.56\times10^{-5}$, which is close to the results 
reported in~\cite{LSN}.

\begin{figure}[t]
{\centering
\includegraphics[width=0.6\linewidth]{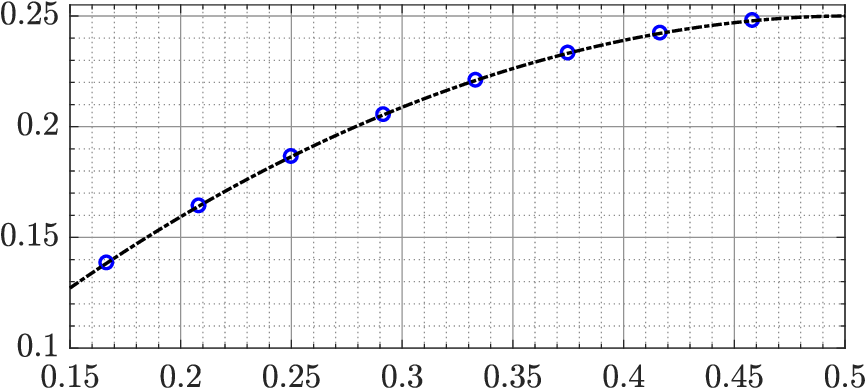}

}
\caption{The computed logarithmic capacity $\capp(E)$ (blue circles) and $f(q)$ 
from~\eqref{eqn:approximation_of_capEq} (dashed); see 
Example~\ref{ex:generalized}.}
\label{fig:cap-f}
\end{figure}
\end{example}

\section{Cantor dust}\label{sec:dust}

The Cantor dust is a generalization of the Cantor set to dimension two.
We compute the logarithmic capacity of the generalized Cantor dust, and start 
by setting up the CSM, similarly to our approach in 
Section~\ref{sec:Gen-Cantor}.

Let $q \in (0,1/2)$ and $F_0 = [0,1] \times [0,1]$.  Define recursively
\begin{equation*} 
F_k := qF_{k-1}\cup\left(qF_{k-1}+1-q\right)\cup\left(qF_{k-1}
+(1-q)\i\right)\cup\left(qF_{k-1}+(1-q)(1+\i)\right), \quad k \geq 1,
\end{equation*}
i.e., $F_k = E_k \times E_k$ with $E_k$ from~\eqref{eq:Ek}.
Then the generalized Cantor dust $F$ is defined as
\begin{equation}\label{eq:F}
F \coloneq \bigcap_{k=0}^{\infty} F_k.
\end{equation}
Note that $F_k$ consists of $m = 4^k$ closed square regions, say $S_{k,1},
S_{k,2}, \ldots, S_{k,m}$; see Figure~\ref{fig:dust} for $q=1/4$ with $k=1$
(left) and $k=2$ (right).
The diameter of each of the squares $S_{k,1},S_{k,2},\ldots,S_{k,m}$ is $2r_k$,
where
\begin{equation}\label{eq:rk-d}
r_k:=\frac{1}{\sqrt{2}}q^k.
\end{equation}
For $j=1,2,\ldots,m$, denote the center of $S_{k,j}$ by $w_{k,j}$.
To be precise, we order the points $w_{k,j}$ recursively by $w_0 = (1+\i)/2$
and
\begin{equation}
w_k = [w_{k,j}] = [q w_{k-1}, q w_{k-1} + (1-q), q w_{k-1} + (1-q) \i, q
w_{k-1} + (1-q) (1 + \i)] \in \C^{1, 4^k}
\label{eqn:wk_dust}
\end{equation}
for $k \geq 1$.
Let $D_{k,j}$ be the disk with center $w_{k,j}$ and radius $r_k$, and let
\[
D_k = \bigcup_{j=1}^m D_{k,j};
\]
see Figure~\ref{fig:dust} for $q=1/4$ with $k=1$ (left) and $k=2$ (right).

\begin{figure}
{\centering
\includegraphics[width=0.4\linewidth]{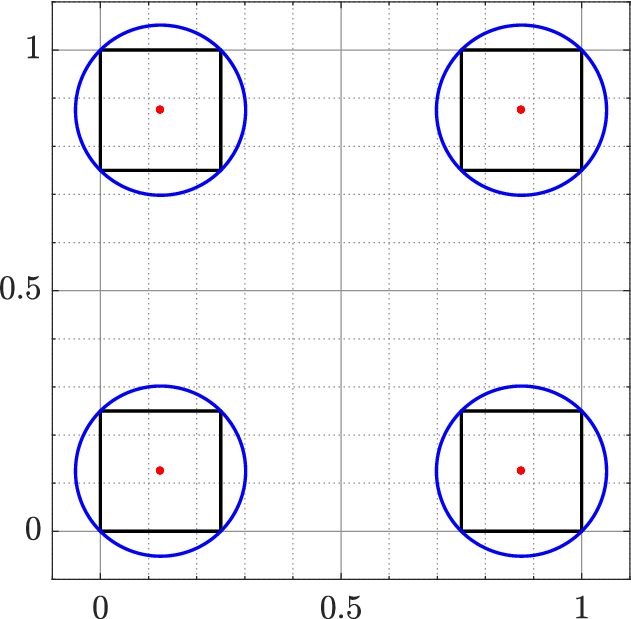} \hfill
\includegraphics[width=0.4\linewidth]{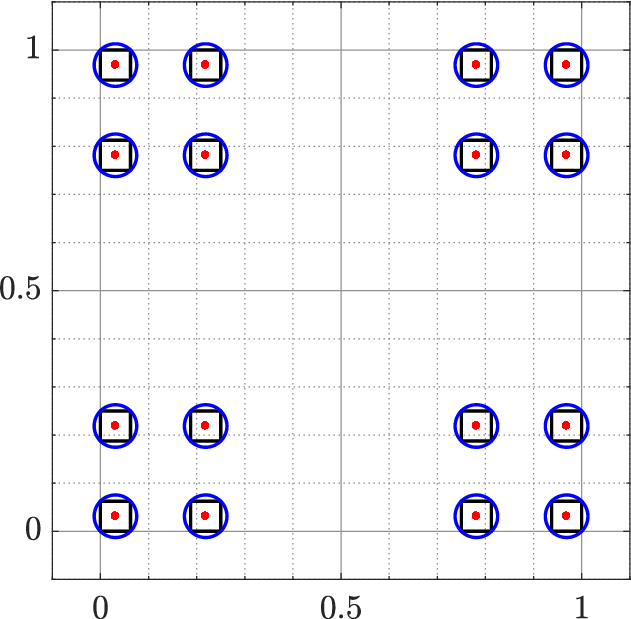}

}
\caption{The Cantor dust with $q=1/4$ for $k=1$ (left) and $k=2$ (right).}
\label{fig:dust}
\end{figure}

\begin{theorem} \label{thm:cantor_dust}
Let $F$ be the Cantor dust defined by~\eqref{eq:F}, then
\[
\bigcap_{k=0}^{\infty} D_k = F
\quad \text{and} \quad
\capp(F) = \lim_{k \to \infty} \capp(D_k).
\]
\end{theorem}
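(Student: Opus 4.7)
The plan is to adapt the proof of Theorem~\ref{thm:E} to the two-dimensional setting. The structure mirrors the one-dimensional case: first establish that $(D_k)_{k \geq 0}$ is a nested decreasing sequence of compact sets with intersection $F$, then apply~\cite[Thm.~5.1.3]{Ransford1995} for the capacity identity.

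The first step is to verify that $D_{k+1} \subseteq D_k$. The crucial geometric observation is that $r_k = q^k/\sqrt{2}$ is exactly the half-diagonal of a square of side length $q^k$, so each disk $D_{k,j}$ circumscribes the square $S_{k,j}$. A sub-disk $D_{k+1,\ell}$ inside $D_{k,j}$ is centered at the midpoint of one of the four corner sub-squares of $S_{k,j}$, at distance $\frac{q^k(1-q)}{\sqrt{2}}$ from $w_{k,j}$, and every point of $D_{k+1,\ell}$ therefore lies within
\begin{equation*}
\frac{q^k(1-q)}{\sqrt{2}} + r_{k+1} = \frac{q^k}{\sqrt{2}} = r_k
\end{equation*}
of $w_{k,j}$. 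Hence $D_{k+1,\ell} \subseteq D_{k,j}$, and summing over $\ell$ and $j$ yields $D_{k+1} \subseteq D_k$.

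Next I would show $\bigcap_k D_k = F$. The inclusion $F \subseteq D_k$ is immediate, since $F \subseteq F_k$ and each square of $F_k$ is inscribed in the corresponding disk of $D_k$. For the reverse inclusion, let $z \in \bigcap_k D_k$, and for each $k$ choose an index $j_k$ with $|z - w_{k,j_k}| \leq r_k$; then $w_{k,j_k} \to z$ as $k \to \infty$ since $r_k \to 0$. For any fixed $m$, the nesting $F_k \subseteq F_m$ for $k \geq m$ gives $w_{k,j_k} \in F_m$ for all $k \geq m$, and $F_m$ being closed forces $z \in F_m$. Since this holds for every $m$, we conclude $z \in \bigcap_m F_m = F$.

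With $\bigcap_k D_k = F$ established and the $D_k$ compact and nested, the identity $\capp(F) = \lim_{k \to \infty} \capp(D_k)$ follows from~\cite[Thm.~5.1.3]{Ransford1995}, exactly as in the last step of Theorem~\ref{thm:E}. The main work is the geometric nesting calculation; in contrast to the one-dimensional case, the coordinate-based argument for $\bigcap D_k \subseteq E$ in Theorem~\ref{thm:E} is unavailable here because $F$ is genuinely two-dimensional, but this obstacle is easily sidestepped by the limit/closedness argument above.
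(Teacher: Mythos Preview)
Your proof is correct and follows the same overall scaffold as the paper: establish $F_k \subseteq D_k$ (hence $F \subseteq \bigcap_k D_k$), prove the reverse inclusion, and then invoke~\cite[Thm.~5.1.3]{Ransford1995} using the nesting $D_0 \supseteq D_1 \supseteq \cdots$. The paper simply asserts the nesting without details, so your explicit triangle-inequality calculation $\frac{q^k(1-q)}{\sqrt{2}} + r_{k+1} = r_k$ is a useful addition.

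The one substantive difference is in the reverse inclusion $\bigcap_k D_k \subseteq F$. The paper argues by contraposition: if $z \notin F$ then $z \notin F_{k_0}$ for some $k_0$, so $d \coloneq \dist(z,F_{k_0}) > 0$; since every point of $D_k$ lies within $r_k - \tfrac{1}{2}q^k = \tfrac{\sqrt{2}-1}{2}q^k$ of $F_k \subseteq F_{k_0}$, one has $z \notin D_{k_1}$ once this excess drops below $d$. You instead take $z \in \bigcap_k D_k$, track the centers $w_{k,j_k} \to z$, and use that these centers lie in the closed set $F_m$ for all $k \geq m$. Both arguments are short and elementary; yours avoids the explicit distance estimate, while the paper's makes the ``disk minus square'' geometry quantitatively visible.
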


\begin{proof}
By construction, $F_k \subseteq D_k$ for all $k \in \N_0$, hence $F \subseteq
\bigcap_{k=0}^\infty D_k$.
If $z \in \C \setminus F$, then there exists $k_0 \in \N$ with
$z \notin F_{k_0}$.  In particular, $d \coloneq \dist(z, F_{k_0}) > 0$ since
$F_{k_0}$ is compact.
Since the maximal distance from a point on $\partial D_{k,j}$ to the
inscribed square $S_{k,j}$ is $r_k - \frac{1}{2} q^k = \frac{\sqrt{2} - 1}{2}
q^k \to 0$ for $k \to \infty$, we have $z \notin D_{k_1}$ for a sufficiently
large $k_1$, hence $z \notin \bigcap_{k=0}^\infty D_k$.
Finally, $\capp(F) = \lim_{k \to \infty} \capp(D_k)$
by~\cite[Theorem~5.1.3]{Ransford1995}, since $D_0 \supseteq D_1 \supseteq D_2
\supseteq \ldots$ are compact and $F = \bigcap_{k=0}^\infty D_k$.
\end{proof}

As in Section~\ref{sec:Gen-Cantor}, we first approximate 
$\capp(D_k)$ for several
$k$ with the CSM, and then extrapolate these values to obtain an approximation
of $\capp(F)$.
With the choice~\eqref{eq:rk-d} of $r_k$, and to make sure the disks $D_{k,j}$,
$j = 1, 2, \ldots, 4^k$, are disjoint, we will consider here only the case
$q<\sqrt{2}-1$.  Then
the complement of $D_k$, denoted by $G_k=(\C\cup \{\infty\}) \setminus D_k$, is an
unbounded multiply connected domain of connectivity $m = 4^k$ with
\[
\partial G_k = \partial D_{k,1}\cup\cdots\cup \partial D_{k,4^k}.
\]
We parametrize the circle $\partial D_{k,j}$ by $\eta_{k,j}(t) = w_{k,j} +
r_k e^{\i t}$, $0\le t\le 2\pi$, for $j=1,2,\ldots,4^k$.

As described in Section~\ref{sec:Gen-Cantor}, we approximate the Green's 
function of $G_k$ by a function $h_k$ of the 
form~\eqref{eqn:approx_Green_function} with~\eqref{eq:sum-p}.
The condition~\eqref{eqn:hk_zero_in_the_mean} leads to the linear algebraic 
system~\eqref{eq:sys-5a}
with the matrix $A \in \R^{m,m}$ from~\eqref{eq:A}, now with $m = 4^k$ instead
of $2^k$, which is symmetric and centrosymmetric.
Then
\begin{equation*}
\capp(D_k) \approx e^{-c_k} \quad \text{where} \quad
c_k= \frac{1}{\mathbf{e}^T A^{-1} \mathbf{e}}.
\end{equation*}
As before, we solve $A \mathbf{x} = \mathbf{e}$ to obtain $c_k$ as $c_k =
1/(\mathbf{e}^T \mathbf{x})$.
Note that the estimates of the entries of $A$ in Theorem~\ref{thm:A} do not
carry over from the case of the generalized Cantor set to the generalized
Cantor dust.

We will now fix $m = 4^k$ and drop the index $k$ in the following for
simplicity.
The matrix $A$ for the generalized Cantor dust has the same structure as the
matrix $A$ for the generalized Cantor set.  In particular, using that $A$ is
centrosymmetric, we can reduce the linear system $A \mathbf{x} = \mathbf{e}$ of
size $m$ to the system
\begin{equation*}
B \textbf{y} = \textbf{e}
\end{equation*}
of size $m/2$, where $B$ is given by~\eqref{eq:B}, and obtain
\[
c = \frac{1}{2 \mathbf{e}^T \mathbf{y}}
\approx \frac{1}{2\mathbf{e}^T \widetilde{\mathbf{y}}},
\]
where $\widetilde{\mathbf{y}}$ is a computed approximate solution to $B
\mathbf{y} = \mathbf{e}$, as described in Section~\ref{sect:system}.
The following analog of Lemma~\ref{lem:B_entries} for the generalized Cantor
dust allows to compute a multiplication with $B$ with a single application of
the FMM.

\begin{lemma}
The entries of $B = \begin{bmatrix} b_{ij} \end{bmatrix} \in \R^{m/2, m/2}$ are
given by
\[
b_{ij} =
\begin{cases}
-\log \abs{2r_k \sqrt{z_{i}}}, &  i=j, \\
-\log|z_{i}-z_{j}|, & i\ne j, \quad 1\le i,j\le m/2,
\end{cases}
\]
where $z_{i} \coloneq (w_i - (1+\i)/2)^2$ for $i=1,\dots, m/2$, and $r_k$ is
given by~\eqref{eq:rk-d}.
\end{lemma}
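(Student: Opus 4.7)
My plan is to mirror the proof of Lemma~\ref{lem:B_entries}. The strategy is to compute $B = A_{11} + A_{12} J_{m/2}$ entry-by-entry and to recognize that the product structure converts the log-modulus kernel in $w$-space into a log-modulus kernel in $z$-space.

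The critical ingredient is the reflection symmetry
\[
w_{m+1-j} = (1+\i) - w_j, \quad j = 1, \ldots, m,
\]
which here plays the role that $w_{m+1-j} = 1 - w_j$ played in the Cantor set case. I would establish this by induction on $k$ using the recursive ordering~\eqref{eqn:wk_dust}. The base case $w_0 = (1+\i)/2$ is self-reflective. For the inductive step, the four blocks of size $4^{k-1}$ in $w_k$ are translates of $q w_{k-1}$ by $0$, $1-q$, $(1-q)\i$, and $(1-q)(1+\i)$; note that blocks~1 and~4 have translations summing to $(1-q)(1+\i)$, as do blocks~2 and~3. An index $j$ in block~1 reflects to index $m+1-j$ in block~4, where
\[
w_{m+1-j} = q w_{k-1, 4^{k-1}+1-j} + (1-q)(1+\i) = q\bigl((1+\i) - w_{k-1,j}\bigr) + (1-q)(1+\i) = (1+\i) - w_j,
\]
using the inductive hypothesis on $w_{k-1}$; blocks~2 and~3 are handled analogously. (This also gives a direct proof that $A$ is centrosymmetric, as was asserted earlier.)

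With the reflection symmetry in hand, the entries of $A_{12} J_{m/2}$ are
\[
\tilde a_{ij} = a_{i, m+1-j} = -\log|w_i - w_{m+1-j}| = -\log|w_i + w_j - (1+\i)|.
\]
For $i \neq j$, the difference-of-squares identity
\[
z_i - z_j = \bigl(w_i - \tfrac{1+\i}{2}\bigr)^2 - \bigl(w_j - \tfrac{1+\i}{2}\bigr)^2 = (w_i - w_j)\bigl(w_i + w_j - (1+\i)\bigr)
\]
then yields
\[
b_{ij} = a_{ij} + \tilde a_{ij} = -\log|w_i - w_j| - \log|w_i + w_j - (1+\i)| = -\log|z_i - z_j|.
\]
For the diagonal, $\tilde a_{ii} = -\log|2w_i - (1+\i)|$, so
\[
b_{ii} = -\log r_k - \log|2w_i - (1+\i)| = -\log\bigl|2 r_k (w_i - \tfrac{1+\i}{2})\bigr| = -\log\bigl|2 r_k \sqrt{z_i}\bigr|,
\]
where the last step uses $|\sqrt{z_i}| = |w_i - (1+\i)/2|$.

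The only genuinely new work compared to Lemma~\ref{lem:B_entries} is the inductive proof of the reflection symmetry: in the Cantor set case it was essentially immediate from the monotone left-to-right ordering of the $w_j$, whereas here the four-fold recursive ordering~\eqref{eqn:wk_dust} requires the block-by-block case analysis sketched above. Once the symmetry is established, the algebraic manipulations are purely formal and parallel the earlier proof.
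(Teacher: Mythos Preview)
Your proof is correct and follows essentially the same route as the paper: compute $A_{12}J_{m/2}$ via a reflection identity for the centers, then combine logarithms using the difference-of-squares factorization. The one notable difference is in how the reflection is expressed. The paper uses the pair of relations $w_{m/2+j}=(1-q)\i+w_j$ and $w_{m/2+1-j}+w_j=1+q\i$ (asserted directly from the definition of $w_k$) and composes them to obtain $\tilde a_{ij}=-\log|w_i+w_j-1-\i|$; you instead prove the single global symmetry $w_{m+1-j}=(1+\i)-w_j$ by induction on $k$ and read off $\tilde a_{ij}$ in one step. Your formulation is slightly cleaner (the parameter $q$ disappears) and, as you note, it simultaneously certifies the centrosymmetry of $A$, at the cost of the short block-by-block induction. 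The remaining algebra is identical to the paper's.
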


\begin{proof}
The entries of the matrix $A_{11}$ are given by
\[
a_{ij} =
\begin{cases}
-\log r_k, &  i=j,\\
-\log|w_{i}-w_{j}|, & i\ne j,
\end{cases}
\]
for $1\le i,j\le m/2$.
The entries of the matrix $A_{12}$ are given by
\[
\hat a_{ij} = -\log|w_{i}-w_{m/2+j}|, \quad 1\le i,j\le m/2.
\]
Unlike the generalized Cantor set, here the points $w_j$, $j=1,2,\ldots,m$,
are complex numbers in the square domain $(0,1)\times(0,1)$.
By the definition~\eqref{eqn:wk_dust} of the points $w_{j}$, we have
\begin{equation*}
w_{m/2+j} = (1-q)\i+w_{j}, \quad w_{m/2+1-j}+w_{j} = 1+q\i,
\quad \text{for } 1\le j\le m/2.
\end{equation*}
Thus,
\[
\hat a_{ij} = -\log|w_{i}-w_{j}-(1-q)\i|, \quad 1\le i,j\le m/2,
\]
and hence the entries $\tilde a_{ij}$ of the matrix $A_{12}J_{m/2}$ are given by
\[
\tilde a_{ij} = \hat a_{i,m/2+1-j}
= -\log|w_{i}-w_{m/2+1-j}-(1-q)\i|
= -\log|w_{i}+w_{j}-1-\i|, \quad 1\le i,j\le m/2.
\]
Finally, the entries $b_{ij}$, $1\le i,j\le m/2$, of the matrix $B$ are given
for $i=j$ by
\[
b_{ii} = a_{ii} + \tilde a_{ii} = -\log r_k-\log|2w_{i}-1-\i|
= -\log \abs{2r_k(w_{i}-(1+\i)/2)}
= -\log \abs{2r_k \sqrt{z_{i}}},
\]
and for $i\ne j$ by
\[
b_{ij}=a_{ij}+\tilde a_{ij}=-\log|w_i - w_j|-\log|w_i + w_j - 1-\i|
= - \log \abs{z_i - z_j},
\]
as claimed.
\end{proof}

The matrix $B$ is symmetric but not centrosymmetric.
Similarly to the approach for the generalized Cantor set in
Section~\ref{sec:GMRES}, 
the condition number of $B$ grows linearly with $m$; see 
Figure~\ref{fig:condd}.  We solve $B \mathbf{y} = \mathbf{e}$ with GMRES and
construct a left preconditioner
for the matrices $A$ and $B$ in the case of the generalized Cantor dust.
For $j = 1, 2, \ldots, k-1$, the matrix $A$ can be written in
the block form~\eqref{eq:A-D} where now $D \in \R^{4^j \times 4^j}$ and
$p=4^{k-j}$.
Define the block-diagonal matrices
\begin{equation*}
P_m = \diag(D, \ldots, D) \in \R^{m,m} \quad \text{and} \quad
P_{m/2} = \diag(D, \ldots, D) \in \R^{m/2,m/2},
\end{equation*}
where $P_m$ contains $p=4^{k-j}$ copies of the matrix $D$, and $P_{m/2}$
contains $p/2$ copies of $D$.
Then the matrix $P_m$ is used as a preconditioner for the
system~\eqref{eqn:Ax-e},
and the matrix $P_{m/2}$ is used as a preconditioner for the
system~\eqref{eqn:B1-system}.
The condition numbers of the matrices $P_m^{-1} A$ and $P_{m/2}^{-1} B$ is
shown in Figure~\ref{fig:ABd-cond} for $j=1,2$.
In our numerical computation, we consider $j=2$ and hence the size of the
matrix $D$ is $16\times 16$.  In this case, we have $\kappa(P_m^{-1} A) \approx
0.077\,\kappa(A)$ and $\kappa(P_{m/2}^{-1} B) \approx 0.077\,\kappa(B)$;
see Figure~\ref{fig:ABd-cond-2}.

\begin{figure}
{\centering
\includegraphics[width=0.45\linewidth]{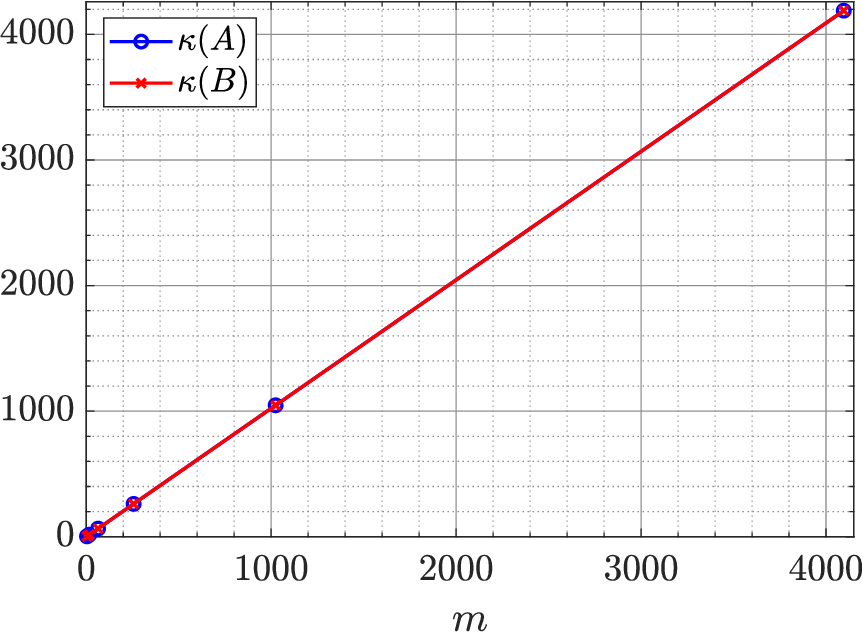}

}
\caption{$2$-norm condition numbers of $A \in \R^{m,m}$ and $B \in \R^{m/2, 
m/2}$ from~\eqref{eq:A} and~\eqref{eq:B} for the Cantor dust with $q = 1/3$ as
functions of $m = 4^k$ for $k = 1, 2, \ldots, 7$.}
\label{fig:condd}
\end{figure}

\begin{figure}
{\centering
\includegraphics[width=0.475\linewidth]{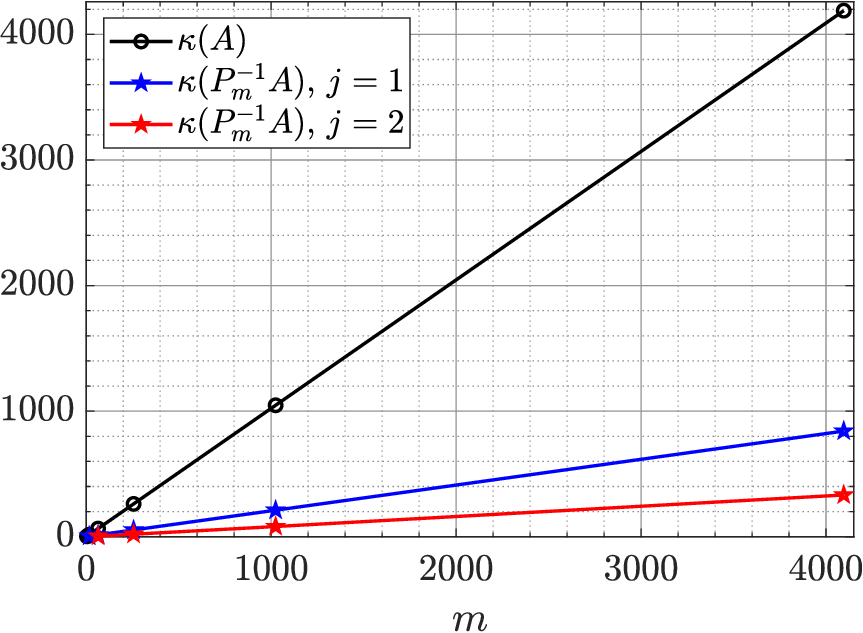}
\hfill
\includegraphics[width=0.475\linewidth]{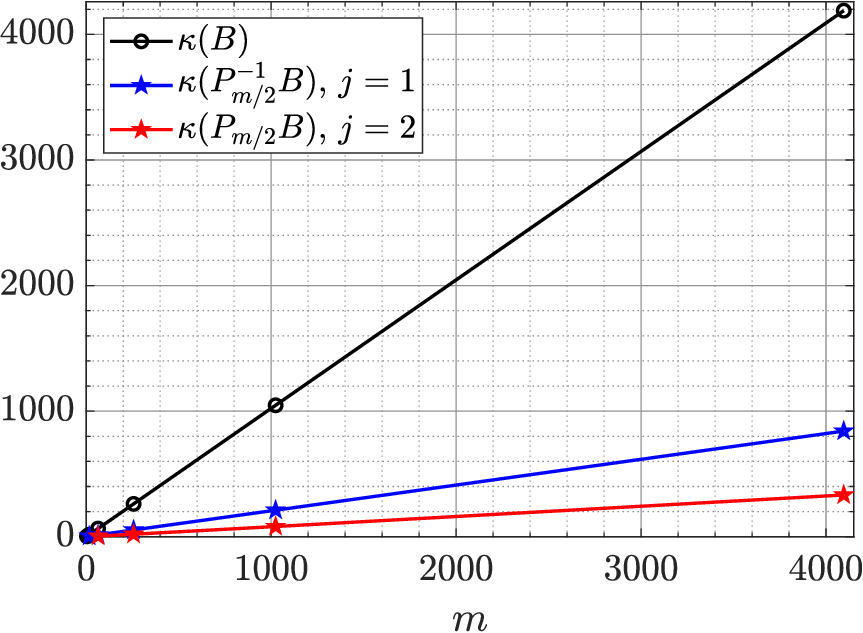}
}
\caption{$2$-norm condition numbers of $P_m^{-1} A \in \R^{m,m}$ and 
$P_{m/2}^{-1} B \in \R^{m/2, m/2}$ as functions of $m = 4^k$ for $q=1/3$, 
$j=1,2$, and $k=j+1,j+2, \ldots,6$.}
\label{fig:ABd-cond}
\end{figure}

\begin{figure}
{\centering
\includegraphics[width=0.475\linewidth]{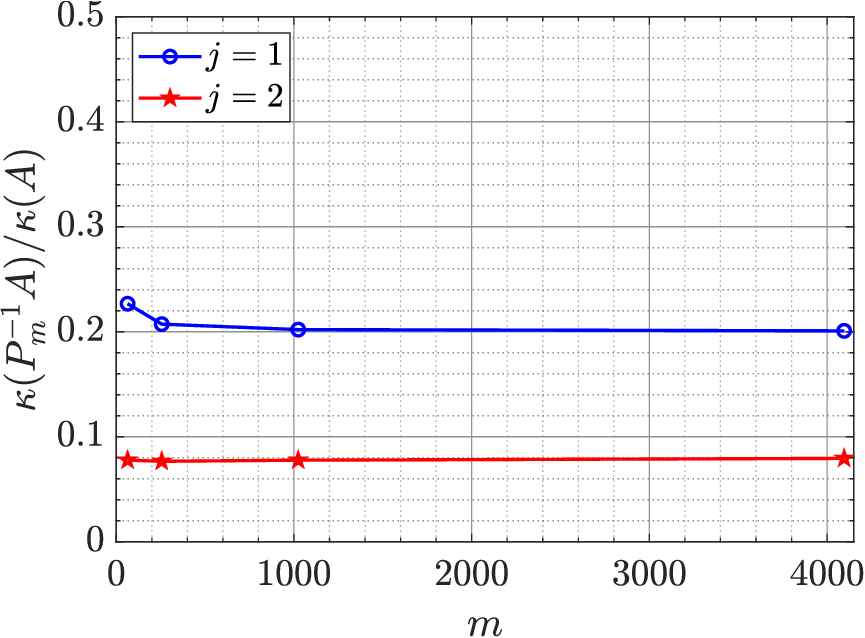}
\hfill
\includegraphics[width=0.475\linewidth]{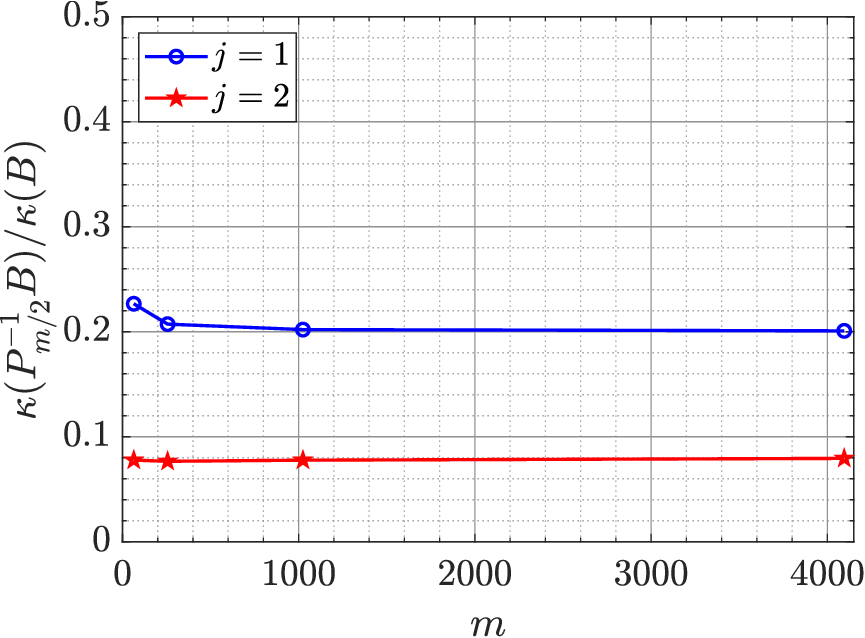}
}
\caption{$\kappa(P_m^{-1}A)/\kappa(A)$ and $\kappa(P_{m/2}^{-1} B)/\kappa(B)$
as functions of $m = 4^k$ for $q=1/3$, $j=1,2$, and $k=3,4,5,6$.}
\label{fig:ABd-cond-2}
\end{figure}

In~\cite[Corollary~3]{minda} it is shown that
\begin{equation}\label{eq:LB-UB}
(1-2q)\sqrt[3]{q} \le \capp(F) \le \sqrt{2} \sqrt[3]{q}.
\end{equation}
For $q=1/3$ the values of the lower and upper bounds rounded to four
significant digits are $0.2311$ and $0.9806$, respectively.
A comparison with~\eqref{eq:LbUb-RR} below illustrates that
we do not expect the bounds~\eqref{eq:LB-UB} to be very tight.
We therefore do not further consider them in our numerical examples.

\begin{example}[Classical Cantor middle third dust]\label{ex:third-d}
The classical Cantor middle third dust is obtained for $q=1/3$.
This example has been considered by Ransford and Rostand
in~\cite[\S5.4]{Ransford2010} where it was proved that
\begin{equation}\label{eq:LbUb-RR}
\capp(F)\in[0.573550,0.575095],
\end{equation}
and their ``best guess'' is
\begin{equation}\label{eq:cap-RR}
\capp(F)\approx \underline{0.57434}50704.
\end{equation}

As for the generalized Cantor sets, we compare our new method with the BIE 
method from~\cite{LSN}.  In that method we first approximate $\capp(D_k)$ using 
$n=2^6$ discretization points on each boundary component. This leads to linear 
algebraic systems of size $nm\times nm$, where $m=4^k$; see 
Remark~\ref{rmk:savings}.
The computed results and the timings for $k=1,2,\ldots,6$ are given in 
Table~\ref{tab:ccd}.  Similarly to Example~\ref{ex:third} we extrapolate the 
values in Table~\ref{tab:ccd} in order to obtain an approximation of $\capp(F)$.
For the approximate values of $\capp(D_k)$ obtained by the BIE method, the 
differences
\begin{equation*}
d_k= \capp(D_{k})-\capp(D_{k+1}),\quad k=1,\dots,5,
\end{equation*}
decrease linearly on a logarithmic scale.  We store these $5$ values in the
vector $d$, and use the MATLAB command {\tt p=polyfit(1:5,log(d),1))} to
compute a linear polynomial $p(x)=p_1x+p_2$ of best approximation in the least 
squares sense for the values $\log(d_k)$.  The computed coefficients are
\begin{equation*}
p_1=-0.981567064346955,\quad p_2=-4.487710197832711.
\end{equation*}
Starting with our computed approximation of $\capp(D_{6})$ we can then
approximate
$\capp(D_k)$ for $k\geq 6$ by extrapolation, i.e.,
\begin{equation*}
\capp(D_k)\approx \capp(D_{6})-\sum_{j=6}^{k-1}\exp(p(j)),\quad k\geq 6.
\end{equation*}
We have $\exp(p(35))<10^{-16}$, and hence we use this formula with $k=35$ for
our estimate
\begin{equation}\label{eq:cap-BIE}
\capp(F)\approx \underline{0.57434}7200461138.
\end{equation}

\begin{table}[t]
\caption{The computed approximate values of $\capp(D_k)$ for $q = 1/3$;
see Example~\ref{ex:third-d}.} \label{tab:ccd}
\begin{center}
\begin{tabular}{l|r|l|r|l|r|r}
\hline
  &   & \multicolumn{2}{c|}{BIE Method~\cite{LSN}}  &
\multicolumn{3}{c}{New Method} \\
\cline{3-7}
$k$ & $m=4^k$  & $\capp(D_k)$ & time (s) & $\capp(D_k)$ & time (s) & iter \\
\hline
 1  & 4       & 0.624190165168316 & 0.31     & 0.560597610169143 & 0.02 & 1\\
 2  & 16      & 0.592984584624405 & 2.27     & 0.569592176189256 & 0.02 & 3\\
 3  & 64      & 0.581332069392025 & 13.60    & 0.572519249447232 & 0.03 & 6 \\
 4  & 256     & 0.576967822959311 & 164.36   & 0.573655973612409 & 0.07 & 12 \\
 5  & 1024    & 0.575330169313215 & 1916.67  & 0.574085928829936 & 0.18 & 25 \\
 6  & 4096    & 0.574715154920085 & 31052.23     & 0.574247683774223  & 0.42 & 
36 \\
 7  & 16384     &  &      & 0.574308476438467 & 1.47 & 54 \\
 8  & 65536     &  &      & 0.574331319690862 & 6.24 & 80 \\
 9  & 262144    &  &      & 0.574339902832891 & 32.13 & 117 \\
 10 & 1048576   &  &      & 0.574343127837383 & 202.43 & 184 \\
\hline
\end{tabular}
\end{center}
\end{table}

Next, we use the new method based on the CSM to compute approximations of
$\capp(D_k)$. Since the size of the preconditioner matrix is $16\times16$, the
method is used without preconditioning technique for $k<3$.  The method with
preconditioning technique is used for $k\ge3$, so that the size of the linear
system is $m\times m$ with $m\ge 64$. Table~\ref{tab:ccd} gives the 
computed approximations of $\capp(D_k)$, the timings, and the 
number of GMRES iteration steps for $k=1,2\dots,10$. We observe that the
computed approximations of $\capp(D_k)$ are increasing, although
$\capp(D_{k+1})\le \capp(D_k)$ (since $D_{k+1}\subset D_k$ for $k\ge 1$).
This is most likely due to the fact that we use the CSM with only one point 
in the interior of each disk $D_{k,j}$, $j=1,2,\ldots,4^k$, so that the  
numerical approximations are not accurate enough to yield a decreasing
sequence. As shown in Table~\ref{tab:ccd}, the BIE method indeed gives
a decreasing sequence of approximations. But here we use $n=2^6$ discretization 
points for each circle $\partial D_{k,j}$ for $j=1,2,\ldots,4^k$, which results
in significantly longer computation times. 

For the approximate values of $\capp(D_k)$ obtained with the CSM in 
Table~\ref{tab:ccd}, the differences
\begin{equation*}
d_k= \capp(D_{k+1})-\capp(D_k),\quad k=1,2,\dots,9,
\end{equation*}
decrease linearly on a logarithmic scale. We store these $9$ values in the
vector $d$ and use the MATLAB command {\tt p=polyfit(1:9,log(d),1))}
to compute a linear polynomial
\begin{equation*}
p(x)=-0.983339218806568\,x-3.806740804822764
\end{equation*}
of best approximation in the least squares sense for the values  $\log(d_k)$.
Starting with our computed approximation of $\capp(D_{10})$ we approximate
$\capp(D_k)$ for $k\geq 11$ by extrapolation, i.e.,
\begin{equation*}
\capp(D_k)\approx \capp(D_{10})+\sum_{j=10}^{k-1}\exp(p(j)),\quad k\geq 11.
\end{equation*}
We have $\exp(p(34))<10^{-16}$, and hence we use this formula with $k=34$ for
our estimate
\begin{equation}\label{eq:cap-CSM}
\capp(F)\approx \underline{0.57434}5031687538.
\end{equation}
This estimate agrees in its first eight significant digits with the estimate 
in~\eqref{eq:cap-RR} obtained in~\cite{Ransford2010}, and all three estimates 
stated above agree in their first five significant digits.

Finally, for $q=1/3$, it is worth mentioning that the new method works for any 
$r_k \in [q^k/\sqrt{2},q^k)$ (instead of~\eqref{eq:rk-d}), i.e., the circles 
can be chosen as large as possible so that each circle $\partial D_{k,j}$ 
encloses the square $S_{k,j}$, and that these circles are disjoint.  Numerical 
experiments (not presented in this paper) show that the new method produces a 
decreasing sequence of approximate values for $\capp(D_k)$ if we choose, for 
example, $r_k=1.25q^k/\sqrt{2}$.  Note that the proof of 
Theorem~\ref{thm:cantor_dust} can be adapted to see that $\capp(F) = \lim_{k 
\to \infty} \capp(D_k)$ also holds for these larger radii.
\end{example}

\begin{example}[Generalized Cantor dust]\label{ex:generalized-d}
We consider now the numerical approximation of $\capp(F)$ for general
$q\in(0,0.5)$.  For the limiting cases $F = \{0,1,1+\i,\i\}$ (corresponding to
$q=0$) and $F = [0,1]\times[0,1]$ (corresponding to $q=0.5$), the capacities
are
\[
\capp(F)=0 \quad \text{and} \quad
\capp(F) = \frac{\Gamma(1/4)^2}{4\pi\sqrt{\pi}}\approx 0.590170299508048,
\]
respectively; see~\cite[Table~1]{LSN}. The new method can be used 
for $q<\sqrt{2}-1$, so that the disks $D_{k,j}$ with the radius $r_k$ given 
by~\eqref{eq:rk-d} are disjoint. Here we use the method to approximate 
the value of $\capp(F)$ for $q=1/20,2/20,\ldots,8/20$.

Similar to Example~\ref{ex:third-d}, the sequences of the computed 
approximations of
$\capp(D_k)$ are not necessarily decreasing.  For this example, the method
generates a decreasing sequence of approximate values for $q=1/20,\ldots,5/20$,
and an increasing sequence for $q=6/20,7/20,8/20$.  Then, using the
approach described in Example~\ref{ex:third-d}, the obtained approximate values
are extrapolated to obtain approximations for $\capp(F)$, which are
stated in Table~\ref{tab:q-d}. 

\begin{table}[t]
\caption{The computed approximations of $\capp(F)$; see 
Example~\ref{ex:generalized-d}.}\label{tab:q-d}
\begin{center}
\begin{tabular}{l|l}
\hline
$q$       & $\capp(F)$ \\
\hline
 $1/20$  & $0.393193419290132$ \\
 $2/20$  & $0.471075541819326$ \\
 $3/20$  & $0.513383693856075$ \\
 $4/20$  & $0.539195036874426$ \\
 $5/20$  & $0.55611125682008$ \\
 $6/20$  & $0.568071614755641$ \\
 $7/20$  & $0.577089193675801$ \\
 $8/20$  & $0.583884621972929$ \\
\hline
\end{tabular}
\end{center}
\end{table}

Similarly to the generalized Cantor sets, it would be of interest to closely 
approximate the values of $\capp(F)$ with a function of $q \in \cc{0, 0.5}$.
After some attempts we came up with
\begin{equation} \label{eqn:approx_cappF}
f(q) = \sqrt{2} \frac{\Gamma(1/4)^2}{4 \pi \sqrt{\pi}} (q(1-q))^{1/4},
\end{equation}
which is shown in Figure~\ref{fig:approx_cappF}.
It is another open question to determine whether there is an exact analytic 
relation between the logarithmic capacities of the generalized Cantor set and 
generalized Cantor dust.
The only relation we are aware of is $\capp(F) \geq 2 \capp(E)$; 
see~\cite[p.~1516]{RanRos07}.

\begin{figure}
{\centering
\includegraphics[width=0.45\linewidth]{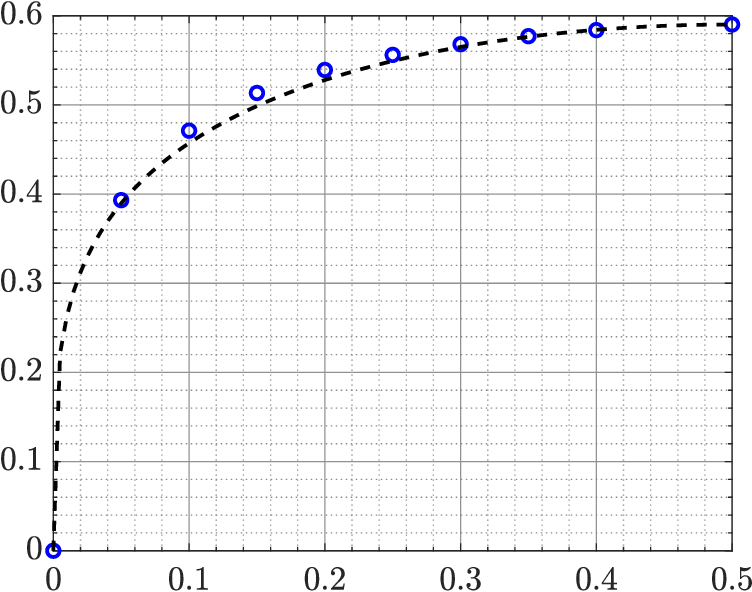}

}
\caption{The computed logarithmic capacity $\capp(F)$ (blue circles) and the 
function $f$ in~\eqref{eqn:approx_cappF} (dashed).}
\label{fig:approx_cappF}
\end{figure}

\end{example}

\section{Concluding remarks}\label{sec:Concl}

In this paper we have applied the CSM to the computation of the logarithmic 
capacity of compact sets consisting of very many ``small'' components.  This 
application allows to use just a single charge point for each component, which 
leads to a significantly more efficient computational method in comparison with 
methods that use discretizations of the boundaries of the different components. 
We have obtained an additional speedup of the method by exploiting the 
structure of the system matrices, and by using a problem-adapted preconditioner 
for the linear algebraic systems.  In the numerical examples we have seen that 
for the same number of components, the new method is faster by a factor of 100 
(sometimes even 1000) than our previous BIE method~\cite{LSN}, while 
maintaining the same high level of accuracy. 
We have applied the method to generalized Cantor sets and the Cantor dust.  We 
are not aware of any other computed approximations of the logarithmic capacity 
of the Cantor dust for $q \neq 1/3$ in the literature.

\bibliographystyle{siam}
\bibliography{capacity}

\end{document}